\documentclass[11pt]{amsart}
\usepackage{amsmath,amssymb,enumerate,etoolbox,mathrsfs}
\usepackage[all]{xy}
\bibliographystyle{amsplain}

\newtheorem{thm}{Theorem}

\newtheorem{prop}[thm]{Proposition}

\newtheorem{lem}[thm]{Lemma}
\theoremstyle{definition}

\newtheorem*{remark*}{Remark}


\newcommand{\At}{\tilde{A}}
\newcommand{\A}{A}
\newcommand{\Bt}{\tilde{B}}
\newcommand{\B}{B}
\newcommand{\X}{X}
\renewcommand{\P}{\mathcal{P}}

\newcommand{\E}{\mathcal{E}}
\newcommand{\C}{\mathbb{C}}
\newcommand{\F}{\mathbb{F}}

\newcommand{\Q}{\mathbb{Q}}
\newcommand{\Z}{\mathbb{Z}}
\newcommand{\SL}{{\rm SL}}
\newcommand{\PSL}{{\rm PSL}}
\newcommand{\PGL}{{\rm PGL}}
\newcommand{\PGSp}{{\rm PGSp}}
\newcommand{\Spin}{{\rm Spin}}
\newcommand{\SU}{{\rm SU}}
\newcommand{\PSU}{{\rm PSU}}
\newcommand{\PGU}{{\rm PGU}}
\newcommand{\Sp}{{\rm Sp}}
\newcommand{\PSp}{{\rm PSp}}
\newcommand{\SO}{{\rm SO}}
\newcommand{\PO}{{\rm PO}}
\newcommand{\GL}{{\rm GL}}

\newcommand{\Res}{{\rm Res}}
\newcommand{\Aut}{{\rm Aut}}
\newcommand{\Gal}{{\rm Gal}}
\newcommand{\Tr}{{\rm Tr}}

\newcommand{\ord}{{\rm ord}}

\newcommand{\Irr}{{\rm Irr}}

\newcommand{\ad}{{\rm ad}}
\renewcommand{\sc}{{\rm sc}}

\renewcommand{\O}{{\mathcal{O}}}
\newcommand{\U}{\mathcal U}
\renewcommand{\L}{\mathcal L}
\newcommand{\uG}{\underline{G}}
\newcommand{\uH}{\underline{H}}
\newcommand{\uS}{\underline{S}}
\newcommand{\uT}{\underline{T}}
\newcommand{\uZ}{\underline{Z}}

\title[Sparse character tables in high rank]{The sparsity of character tables of high rank groups of Lie type}
\author[M.~J.~Larsen]{Michael J.~Larsen}
\address{Department of Mathematics, Indiana University, Bloomington, IN, USA}
\email{mjlarsen@indiana.edu}
\author[A.~R.~Miller]{Alexander~R.~Miller}
\address{Faculty of Mathematics, University of Vienna, Austria}
\email{alexander.r.miller@univie.ac.at}
\begin{document}
\begin{abstract}
In the high rank limit, the fraction of non-zero character table entries of finite simple groups of Lie type goes to zero.
\end{abstract}
\thanks{ML was partially supported by the NSF grant DMS-1702152. AM was partially supported by the Austrian Science Foundation.}
\subjclass[2010]{20C33}
\maketitle
\thispagestyle{empty}
\section{Introduction}

Let $G$ be a finite group.  
Let $G^G$ denote the set of conjugacy classes of $G$ and $\Irr(G)$ the set of irreducible characters, so
$k(G) = |G^G| = \Irr(G)$.
A well-known theorem of Burnside asserts that if $\chi\in \Irr(G)$ and $\chi(1)>1$, then there exists $g\in G$ such that $\chi(g) = 0$; in particular,
there are zero entries in the character table of every non-abelian $G$.
In fact, one can make much stronger statements about the subset of $G^G\times \Irr(G)$ determined by the vanishing condition,
and there is a substantial literature devoted to such results.  We are interested in the opposite extreme from abelian groups,
namely groups for which almost all entries are zero.

We define the \emph{sparsity} $\Sigma(G)$ to be the fraction of non-zero entries in the character table of $G$:
$$\Sigma(G) := \frac{|\{(g^G, \chi)\in  G^G\times\Irr(G)\mid \chi(g) \neq 0\}|}{k(G)|\Irr(G)|}.$$
For finite simple groups of bounded rank, it is not too difficult to analyze the asymptotic behavior of $\Sigma(G)$.
For instance,
$$\lim_{q\to \infty}\Sigma(L_2(q)) = \frac 12.$$
For other series of finite simple groups with fixed Dynkin diagrams, the limit is rational and non-zero.
In this paper, we consider what happens when $G$ ranges over finite simple groups of Lie type of unbounded rank.  Our result is the following.

\begin{thm}
\label{main}
Given any sequence $G_i$ of finite simple groups of Lie type with rank tending to $\infty$,  $\lim_{i\to\infty} \Sigma(G_i) = 0$.
\end{thm}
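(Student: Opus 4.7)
The plan is to combine Lusztig's Jordan decomposition of irreducible characters with the Deligne--Lusztig support theorem, reducing the sparsity of $G$ to a counting estimate controlled by the sizes of Weyl-group centralizers.

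Since the rank tends to $\infty$, each $G_i$ is eventually a classical group. Write $\Irr(G) = \bigsqcup_{[s]} \E(G, s)$ indexed by semisimple conjugacy classes $[s]$ of the dual group $G^*$; each $\chi \in \E(G,s)$ is a $\Q$-linear combination of Deligne--Lusztig virtual characters $R_T^G(\theta)$ where $T$ ranges over maximal tori of $G$ ``dually compatible'' with a torus of $G^*$ containing $s$ (i.e., same geometric torus type via Langlands duality). By the Deligne--Lusztig support theorem, $\chi(g) = 0$ unless the semisimple part $g_s$ of $g$ is $G$-conjugate to an element of such a $T$.

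Restrict first to pairs with $g$ and $s$ both regular semisimple: compatibility forces them into the same geometric torus type, indexed by an $F$-conjugacy class $[w]$ of the Weyl group $W$. The number of regular semisimple classes in $G$ (respectively $G^*$) of torus type $w$ is asymptotic to $|T_w|/|C_W(w)|$. Hence
\[
\#\{\text{regular-regular non-vanishing pairs}\} \;\lesssim\; \sum_{[w]\subset W}\left(\frac{|T_w|}{|C_W(w)|}\right)^2 \;\leq\; \frac{\max_w |T_w|}{\min_w |C_W(w)|}\cdot \sum_{[w]} \frac{|T_w|}{|C_W(w)|}.
\]
The last sum is the number of semisimple classes, which is $\sim k(G) \sim q^r$. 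For classical $W$ of rank $r$, $\min_w |C_W(w)|$ grows at least linearly in $r$ (for $S_r$ it equals $r-1$, attained at cycle type $(r-1,1)$; similarly for hyperoctahedral groups), while $\max_w |T_w|\sim q^r$. This yields a regular-regular contribution to $\Sigma(G)$ of order $O(1/r)\to 0$.

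The main obstacle will be the contribution from non-regular pairs. For $s$ with $C_{G^*}(s)^F$ strictly larger than a torus, $|\E(G,s)|$ equals the number of unipotent characters of $C_{G^*}(s)^F$, which grows polynomially (like a partition count) in the rank of the semisimple part. Dually, non-regular $g$ carries unipotent-class data on $C_G(g_s)^F$. The non-regular semisimple classes are not a vanishing fraction of all classes (particularly for small $q$), so they cannot be discarded. The natural strategy is to recurse --- applying the same Weyl-group-centralizer bound inside each centralizer $C_{G^*}(s)^F$ and summing over the co-rank of the centralizer --- and to verify that the cumulative non-regular contribution remains $o(k(G)^2)$ uniformly in $q$. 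Carrying out this induction cleanly, in particular balancing the partition-count growth of unipotent characters against the Burnside-type decay of the Weyl factor, is the delicate technical core of the proof.
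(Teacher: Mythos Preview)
Your approach is genuinely different from the paper's, and the regular--regular part is essentially correct: for a regular Deligne--Lusztig character $\pm R_T^G(\theta)$ and a regular semisimple $g$, the character formula forces the torus types to coincide, and the bound $\sum_{[w]}(|T_w|/|C_W(w)|)^2 \le (\max_w |T_w|/\min_w |C_W(w)|)\cdot\sum_{[w]}|T_w|/|C_W(w)|$ does give $O(q^{2r}/r)$ against $k(G)^2\sim q^{2r}$.

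The gap is the non-regular contribution, which you yourself flag as the ``delicate technical core'' but do not actually carry out. Two concrete obstructions:
\begin{enumerate}
\item For fixed small $q$ the non-regular locus is a \emph{fixed positive fraction} of $\P$ (e.g.\ exactly $1/q$ of monic polynomials over $\F_q$ are not squarefree), so it cannot be absorbed into an error term; one really needs an argument there.
\item Your proposed recursion does not go through as stated. For non-regular $s$ the irreducible $\chi\in\E(G,s)$ is a linear combination of $R_T^G(\theta)$ over \emph{all} torus types of $C_{G^*}(s)$, so the support constraint no longer pins down a single $[w]$. Inside $C_{G^*}(s)$ the relevant characters are the unipotent ones, and unipotent characters are combinations of $R_T^H(1)$ over \emph{every} torus type of $H$; there is no residual ``torus-matching'' to iterate. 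So the mechanism that gave you $1/|C_W(w)|$ savings in the regular case simply is not available at the next step, and it is unclear what replaces it.
\end{enumerate}

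For comparison, the paper proceeds by an entirely different, arithmetic route. It never uses the support theorem. Instead it shows that for most pairs $(g^G,\chi)$ one can find a large Zsigmondy prime $\ell$ with $\ord_\ell|g^G|<\ord_\ell\chi(1)$, so that the integer $d_{g^G,\chi}=\chi(1)/(\chi(1),|g^G|)$ is divisible by $\ell>\sqrt n$. Since $\chi(g)$ is always divisible by $d_{g^G,\chi}$ in the cyclotomic integers, a nonzero value forces the Galois average of $|\chi(g)|^2$ to be at least $n$. Combining this with second orthogonality (and a uniform bound showing that most classes have centralizer of size $O(q^r)$, so that averaging over elements and over classes are comparable) yields a contradiction if too many entries are nonzero. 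The non-regular issue is handled not by recursion but by the ``$m$-regularity'' device: one allows $\rho(P)\le m$, proves subexponential control on fibers over such $P$, and only at the end lets $m\to\infty$; the exponential decay $|\{P:\rho(P)\ge m\}|\le 4q^{-m/4}|\P|$ then kills the remainder uniformly in $q$.
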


To round out the story, it would be interesting to know whether $\Sigma(A_n)\to 0$ (or, equivalently, whether $\Sigma(S_n)\to 0$) as $n\to \infty$.
This remains open.   The numerical evidence \cite{ARMiller2}
seems to point to a limit strictly between $0$ and $1$.
Interestingly, Miller proved \cite{ARMiller1}
that for random pairs $(\chi,g)\in \Irr(G)\times G$, the probability that $\chi(g)=0$ goes to $1$ as $G$ ranges over symmetric groups.

The proof of Theorem~\ref{main}
uses a trick of Burnside and roughly parallels that of \cite{GLM}.  Given a pair $(g^G,\chi)$, let
$$d_{g^G,\chi} := \frac{\chi(1)}{(\chi(1),|g^G|)}.$$
By \cite{GLM}, $\chi(g)$ is divisible by $d_{g^G,\chi}$ in a ring of cyclotomic integers. 
If $\alpha\neq 0$ is an algebraic integer, then the average of $|\alpha_i|^2$ over all conjugates $\alpha_i$ of $\alpha$
is at least $1$ \cite[p.~459]{PXGallagher2}. 
Therefore, if $\alpha$ is divisible by a rational integer $d$, then the average is at least $d^2$.  The multiset of values $\chi(g)\neq 0$ where $\chi(g)$ is divisible by some rational integer $d > D$
is stable under the action of $\Gal(\Q(\zeta_{|G|})/\Q)$, so the average of $|\chi(g)|^2$ over all such values is greater than $D^2$.
By the orthonormality of characters, the average of $|\chi(g)|^2$ over all pairs $(\chi,g)$ is $1$.  Of course, this average is over elements rather than conjugacy classes,
so a key ingredient of the argument is Proposition~\ref{Few large centralizers},
which implies that, for finite simple groups of Lie type, it does not make much difference 
which kind of average one takes.
This is not true for alternating groups, since the partition associated to a randomly chosen element of $S_n$ has $\log n +o(\log n)$ parts \cite{ET}, 
while a typical partition of $n$ has $(\pi^{-1}\sqrt{3/2} + o(1)) \sqrt n\log n$ parts \cite{EL}.

To show that $d_{g^G,\chi}$ is usually large, we  show that for most choices of $(g^G,\chi)$, we can find a large Zsigmondy prime $\ell$ such 
$\ord_\ell |g^G| < \ord_\ell \chi(1)$.  To do this, we need to have a good qualitative understanding of the degrees of irreducible characters of $G$.
This is provided by the Lusztig theory.  In the large $q$ limit, regular semisimple elements and irreducible Deligne-Lusztig characters predominate, and it is
relatively easy prove the needed estimates.  For fixed $q$, we have to work harder, but most of what is needed is already available in work of Fulman-Guralnick \cite{FG,FG2}
and Larsen-Shalev \cite{LS}.

\section{General framework}
A key technical difficulty in implementing our strategy is that in counting conjugacy classes and characters, it is often easier to work not with $G$ but with
some closely related group.  For instance, for $\PSL_n(\F_q)$, it is easier to study conjugacy classes of $\SL_n(\F_q)$ and characters of $\PGL_n(\F_q)$.
To deal with these difficulties, we 
consider the following general situation.  Suppose that we have maps of finite sets
\begin{equation}
\label{data}
\xymatrix{\At\ar^f[r]\ar@{->>}_\phi[d]&\P&\Bt\ar[l]_g\ar@{->>}[d]^\psi\\
\A&&\B}
\end{equation}
and a subset $\X\subset \A\times \B$.  Our goal is to show that, under suitable conditions, $|\X|$ is small compared to $|\A\times \B|$.  We say $\P^\circ \subset \P$ and $\A^\circ\subset \A$ are \emph{compatible} if
$f^{-1}(\P^\circ) = \phi^{-1}(\A^\circ)$, and likewise for $\P^\circ$ and $\B^\circ$.
In this case, we define $\At^\circ\subset \At$ (respectively $\Bt^\circ\subset \Bt$) to be this common
inverse image.

In the intended application, $\A$ will be the set of conjugacy classes of a finite simple group $G$, $\B$ the set of irreducible characters of $G$, and  $\X$, which we want to prove small, will be the set of pairs $(g^G,\chi)$ for which $\chi(g)\neq 0$.
The precise definitions of $\At$, $\Bt$, and $\P$ are given in \S5.  The definitions of the sets $\A^\circ,\At^\circ,\B^\circ,\Bt^\circ,\P^\circ$ depend on a single parameter as explained in \S9.

\begin{prop}
\label{Master}
There exists an absolute constant $N$ with the following property.
For all $\epsilon > 0$  there exists $\delta>0$ such that given data \eqref{data}, a subset $\X\subset \A\times \B$, and 
compatible subsets $\A^\circ\subset \A$, $\B^\circ\subset \B$, and 
$\P^\circ\subset \P$ which are compatible and
satisfy the following conditions, then $|\X| \le \epsilon |\A\times \B|$:
\begin{enumerate}
\item $|\A^\circ| > (1-\delta)|\A|$.
\item $|\B^\circ| > (1-\delta)|\B|$.
\item For $a_1\in \A^\circ$, $a_2\in\A$ we have $|\phi^{-1}(a_1)|\ge |\phi^{-1}(a_2)|$.
\item For $b_1\in \B^\circ$, $b_2\in \B$ we have $|\psi^{-1}(b_1)|\ge |\psi^{-1}(b_2)|$.
\item For all $n\ge N$, $\{P\in \P\mid |f^{-1}(P)|\ge n\}$ has less than $n^{-2} |\P|$ elements.
\item For all $n\ge N$, $\{P\in \P\mid |g^{-1}(P)|\ge n\}$ has less than $n^{-2} |\P|$ elements.
\item $|\P|\le  |\At|$.
\item $|\P|\le |\Bt|$.
\item The set of pairs $(P_1,P_2)\in \P^\circ\times \P^\circ$ such that $(P_1,P_2)\in (f,g)((\phi,\psi)^{-1}(\X))$ has cardinality less than $\delta |\P|^2$.
\end{enumerate}
\end{prop}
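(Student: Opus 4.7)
The plan is a pushforward--pullback argument through $\P\times\P$, which plays the role of a common coarsening of both $\A\times\B$ (via $\phi,\psi$) and $\At\times\Bt$ (via $f,g$). First, reduce to the compatible part: set $\X^\circ := \X\cap(\A^\circ\times\B^\circ)$; conditions (1), (2) give $|\X\setminus\X^\circ|<2\delta|\A||\B|$, so it suffices to bound $|\X^\circ|$.

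Next, pull back: put $\tilde\X^\circ := (\phi,\psi)^{-1}(\X^\circ)\subset\At^\circ\times\Bt^\circ$, the inclusion being forced by compatibility. Conditions (3), (4) say that every fiber of $\phi$ over $\A^\circ$ is at least as large as every fiber over $\A$, hence dominates the global average $|\At|/|\A|$; similarly for $\psi$. Summing $|\phi^{-1}(a)|\,|\psi^{-1}(b)|$ over $\X^\circ$ gives
$$|\tilde\X^\circ| \;\ge\; |\X^\circ|\cdot\frac{|\At|\,|\Bt|}{|\A|\,|\B|},$$
so an upper bound on $|\tilde\X^\circ|$ translates into one on $|\X^\circ|$.

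The heart of the argument is to push $\tilde\X^\circ$ forward by $(f,g)$. Compatibility places the image $S:=(f,g)(\tilde\X^\circ)$ inside $\P^\circ\times\P^\circ$, and (9) then gives $|S|<\delta|\P|^2$. Writing $h(P):=|f^{-1}(P)|$ and $k(P):=|g^{-1}(P)|$, one has $|\tilde\X^\circ|\le\sum_{(P_1,P_2)\in S}h(P_1)\,k(P_2)$, which I would split at a threshold $M\ge N$ to be chosen later. The bulk pairs, where $h(P_1),k(P_2)<M$, contribute at most $M^2|S|\le M^2\delta|\P|^2$. The tail is where (5), (6) earn their keep: a layer-cake split of $\sum_{P:\,h(P)\ge M}h(P)$ at $t\le M$ and $t>M$ applies the $n^{-2}|\P|$ bound at each level to produce a total of at most $2|\P|/M$, and likewise for $k$; so the tail contribution to $|\tilde\X^\circ|$ is at most $(2|\P|/M)(|\At|+|\Bt|)$.

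Combining these estimates, dividing by $|\At|\,|\Bt|/(|\A|\,|\B|)$ from the pullback step, and using $|\P|\le\min(|\At|,|\Bt|)$ from (7), (8) to collapse the tail to $4|\A|\,|\B|/M$, one arrives at
$$|\X|\;\le\;\bigl(M^2\delta + 4/M + 2\delta\bigr)\,|\A|\,|\B|.$$
Choosing $M$ large (depending on $\epsilon$ and $N$) to kill the $4/M$ term, and then $\delta$ small (depending on $M$) to kill the remaining $(M^2+2)\delta$, yields $|\X|\le\epsilon|\A|\,|\B|$. The main obstacle is controlling the $h\,k$ sum over $S$ when the fibers are large: conditions (5), (6) furnish exactly the $1/M$ decay needed for the layer-cake tail bound, while (7), (8) ensure that the $|\At|,|\Bt|$ factors do not overwhelm the $|\P|$ denominators in the final accounting.
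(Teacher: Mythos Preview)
Your argument is correct and follows essentially the same route as the paper: pull $\X^\circ$ back through $(\phi,\psi)$ using the fiber lower bound from (3)--(4), control the pushforward through $(f,g)$ via a threshold $M\ge N$ with the layer-cake estimate $\sum_{h(P)\ge M}h(P)\le 2|\P|/M$ from (5)--(6), and collapse using (7)--(8) and (9) to the same inequality $x\le M^2\delta+4/M+2\delta$. The only cosmetic difference is ordering: the paper first strips off the large-fiber tail from $\tilde\X^\circ$ and then pushes the remainder forward (bounding the image below), whereas you push forward first and then split the fiber-weighted sum over $S$ into bulk and tail; the estimates and final constants are identical.
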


\begin{proof}
By conditions (3) and (4),
every $(\phi,\psi)$-fiber over $\A^\circ\times \B^\circ$ has cardinality at least 
$$\frac{ |\At \times \Bt|}{|\A\times \B|}.$$
Let $x=|X|/|\A\times\B|$. 
Then by conditions (1) and (2),
$$|\X\cap (\A^\circ\times \B^\circ)| > (x-2\delta)|\A\times \B|,$$
so
\begin{equation}
\label{inverse image bound}
|(\phi,\psi)^{-1}(X) \cap (\At^\circ\times \Bt^\circ)| = |(\phi,\psi)^{-1}(\X\cap  (\A^\circ\times \B^\circ))| > (x-2\delta)|\At\times \Bt|.
\end{equation}

Let $\P_{f,n}$ denote the set of elements  $P\in \P^\circ$  such that $|f^{-1}(P)| = n$.  Let $M\geq N$.  By condition (5), 
\begin{equation}
\label{Few A-tildes in big fibers}
\begin{split}
\bigm| \bigcup_{n\ge M}& f^{-1}(\P_{f,n}) \bigm| = \sum_{n\ge M} n |\P_{f,n}| \\
                                                                     &= M |\{P\in \P^\circ\mid |f^{-1}(P)|\ge M\}| + \sum_{i=1}^\infty |\{P\in \P^\circ\mid |f^{-1}(P)|\ge M+i\}|\\
                                                                     & \le \frac{|\P|}M + \sum_{i=1}^\infty \frac{|\P|}{(M+i)^2} \le \frac{2|\P|}{M}.
\end{split}
\end{equation}
Likewise, condition (6) implies
$$\bigm| \bigcup_{n\ge M} g^{-1}(\P_{g,n}) \bigm| \le \frac{2 |\P|}M.$$
By inequality \eqref{inverse image bound} and conditions (7) and (8),
the subset of $(\phi,\psi)^{-1}(\X)$ consisting of elements which map by $(f,g)$ to 
$$\{P\in \P^\circ\mid |f^{-1}(P) |< M\}\times \{P\in \P^\circ\mid |g^{-1}(P)| < M\}$$
has more than 
$$(x-2\delta-4M^{-1})|\P|^2$$
elements.  
On this set the map $(f,g)$ takes at most $M^2$ elements to any given element,
so the cardinality of the image by $(f,g)$ is at least
$$\frac{(x-2\delta-4M^{-1})|\P|^2}{M^2}.$$
By condition (9), this must be less than $\delta|\P|^2$, so $x< M^2\delta+2\delta+4M^{-1}$.
By choosing $M$ larger than $N$ and $8/\epsilon$, we get $x<\epsilon$ if 
$\delta<\frac{\epsilon}{2M^2+4}$. 
\end{proof}

\section{Subexponential sequences}

In this section, we prove some basic facts about subexponential sequences that will be useful for checking the hypotheses of Proposition~\ref{Master}.

We say that a sequence $a_1,a_2,\ldots$ of nonnegative integers is \emph{subexponential} if for all $\gamma > 1$ we have $\lim_{n\to \infty} \gamma^{-n}a_n = 0$.  This is equivalent to the condition that $\sum_n a_n z^n$ converges in the open unit disk.  It is clear from this criterion
that the coefficients of the product of power series with subexponential coefficients again has subexponential coefficients.
From the definition, it is clear that the termwise product of subexponential sequences is again subexponential.

\begin{lem}
\label{subexp}
If $(a_i)_{i=1,2,\ldots}$ is subexponential, then the sequence $A_m$ of coefficients of the formal power series 
$$A(z) := \prod_{i=1}^\infty (1-z^i)^{-a_i} = \sum_m A_m z^m$$
is likewise subexponential.
\end{lem}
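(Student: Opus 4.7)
The plan is to prove $A(r) := \sum_{m\ge 0} A_m r^m < \infty$ for every $r\in(0,1)$. Because $A_m\ge 0$, this forces $A_m r^m \to 0$, and taking $\gamma = 1/r$ yields $\gamma^{-m}A_m\to 0$ for every $\gamma>1$, which is the definition of subexponential.

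First I would confirm that each $A_m$ is a well-defined non-negative integer. Since
\[ (1-z^i)^{-a_i} \;=\; \sum_{k\ge 0}\binom{a_i+k-1}{k}\,z^{ik} \]
has non-negative integer coefficients and its lowest non-constant monomial has degree $i$, only the finitely many factors with $i\le m$ contribute to the coefficient of $z^m$. Hence $A_m$ is a finite sum of products of non-negative integers, and the partial products $P_N(z) := \prod_{i\le N}(1-z^i)^{-a_i}$ have coefficients that stabilize at $A_m$ once $N\ge m$. In particular $A(r) = \lim_N P_N(r)$ as a monotone increasing limit in $[0,\infty]$.

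Next I would take logarithms. For each $N$ one has $\log P_N(r) = \sum_{i\le N}a_i\log\frac{1}{1-r^i}$, so by monotone convergence
\[ \log A(r) \;=\; \sum_{i=1}^\infty a_i \log\frac{1}{1-r^i} \;\le\; \frac{1}{1-r}\sum_{i=1}^\infty a_i\,r^i, \]
using the elementary inequality $-\log(1-x)\le x/(1-x)$ on $[0,1)$. Now fix any $\gamma\in(1,1/r)$; by subexponentiality of $(a_i)$ there is a constant $C=C_\gamma$ with $a_i\le C\gamma^i$ for every $i$, and then $\sum_i a_i r^i \le C\sum_i (r\gamma)^i < \infty$. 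Thus $\log A(r)<\infty$, so $A(r)<\infty$, completing the argument.

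No step poses a real obstacle: positivity of every quantity in sight makes the interchanges of sum and product with logarithm automatic via monotone convergence, and subexponentiality of $(a_i)$ is invoked exactly once, to push $\sum_i a_i r^i$ across the boundary of summability by taking $\gamma$ slightly below $1/r$. A purely combinatorial alternative would be to split the product as $\prod_{i\le M}(1-z^i)^{-a_i}\cdot\prod_{i>M}(1-z^i)^{-a_i}$ and iterate the remark preceding the lemma that the product of two power series with subexponential coefficients has subexponential coefficients, controlling the tail; but the logarithmic bound above is crisper.
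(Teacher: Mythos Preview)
Your proof is correct and follows essentially the same approach as the paper's: both show $A(z_0)<\infty$ for every $z_0\in(0,1)$ by taking logarithms of the partial products, bounding $-\log(1-z_0^i)$ by a constant multiple of $z_0^i$, and then invoking subexponentiality of $(a_i)$ to make $\sum_i a_i z_0^i$ converge. The only differences are cosmetic---you use the explicit inequality $-\log(1-x)\le x/(1-x)$ where the paper appeals to boundedness of $\log(1-z)/z$ on $(0,z_0]$, and you are a bit more careful about justifying that $A_m$ is well-defined.
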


\begin{proof}
As $a_i\ge 0$ for all $i$, 
$$A^{(n)}(z):=\prod_{i=1}^n (1-z^i)^{-a_i} = \sum_{m=0}^\infty A^{(n)}_m z^m$$
has nonnegative coefficients, and for each $m\ge 0$, the sequence $(A^{(n)}_m)_{n=1,2,\ldots}$ is nondecreasing.  Therefore, for any $z_0\in (0,1)$, the sequence $(A^{(n)}(z_0))_{n=1,2,\ldots}$ is nondecreasing.  As $A_m = A^{(n)}_m$, for all $n\ge m$, we have
$$A^{(n)}(z_0)\ge \sum_{m=0}^n A_m z_0^m.$$
Thus, if $A(z)$ converges at $z=z_0$, then
$\lim_{n\to \infty} A^{(n)}(z_0)$ exists and equals $A(z_0)$, and conversely, if $\lim_{n\to \infty} A^{(n)}(z_0)$ exists, then its limit is an upper bound for the increasing sequence of partial sums of $\sum_{m=0}^\infty A_m z_0^m$, so this series converges.

As $\lim_{z\downarrow 0} \frac{\log(1-z)}z = -1$, the function $\frac{\log(1-z)}z$ is bounded on every interval of the form $(0,a]\subset (0,1)$.  Applying this for $a=z_0$, we obtain
$$\log A^{(n)}(z_0) = \sum_{i=1}^n -a_i\log(1-z_0^i) < C \sum_{i=1}^n a_i z_0^i,$$
where $C$ depends on $z_0$ but not on $n$.  As $a_i$ is subexponential, the right hand side in this inequality is bounded independent of $n$, so the sequence $A^{(n)}(z_0)$ is bounded, so it converges.
\end{proof}

In particular, when $a_i = 1$ for all $i$, we obtain the well-known fact that the partition function $p(n)$ is subexponential.

\begin{lem}
\label{Products of subexponentials}
Let $(a_i)_{i=1,2,\ldots}$ and $(b_i)_{i=1,2,\ldots}$ be two sequences of positive integers such that $a_i$ is subexponential and $b_i$ is arbitrary.
Define $c_k$ to be the maximum of $\prod_j a_j^{e_j}$ as $1^{e_1}2^{e_2}\cdots$ ranges over all partitions of $k$ with $e_i\le b_i$ for all $i$.
Then $c_k$ is subexponential.
\end{lem}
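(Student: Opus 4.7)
The plan is to estimate $\log c_k$ directly, splitting a maximizing partition into ``small'' parts ($j<I$) and ``large'' parts ($j\ge I$), with the cutoff $I$ chosen in terms of the target base $\gamma>1$. For $j\ge I$, the subexponentiality of $(a_j)$ should force $\log a_j$ to be much smaller than $j$, so the contribution of the large parts will be bounded by roughly $(k/2)\log\gamma$. For $j<I$, the constraints $e_j\le b_j$ cap the total multiplicity $\sum_{j<I}e_j$ by the finite constant $\sum_{j<I}b_j$, so the small parts contribute only an additive constant to $\log c_k$, independent of $k$.

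Concretely, given $\gamma>1$, I would set $\delta=\gamma^{1/2}>1$ and invoke subexponentiality of $(a_j)$ to choose $I$ with $a_j\le\delta^j$, i.e.\ $\log a_j\le (j/2)\log\gamma$, for all $j\ge I$. Writing $M=\max_{j<I}\log a_j$ and $B=\sum_{j<I}b_j$ (both finite since the $a_j$ and $b_j$ are positive integers and only finitely many indices are involved), any valid partition of $k$ should satisfy $\sum_j e_j\log a_j\le MB+(k/2)\log\gamma$, and therefore $c_k\le e^{MB}\gamma^{k/2}$, so $\gamma^{-k}c_k\to 0$ as $k\to\infty$. Since $\gamma>1$ was arbitrary, this yields the claim.

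The main conceptual point, and the potential pitfall, is to recognize that it is the individual finiteness of each $b_j$, rather than any particular growth rate, that rescues the argument: without the bound $e_1\le b_1$, the choice $e_1=k$ would give $c_k\ge a_1^k$, which is exponential whenever $a_1\ge 2$. Once this is in place, Lemma~\ref{subexp} is not strictly needed for the maximum version of the statement; it would be the natural tool if the ``max'' in the definition of $c_k$ were replaced by a sum over valid partitions, where a radius-of-convergence argument in the spirit of the previous proof would be required.
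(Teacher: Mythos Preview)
Your argument is correct and is essentially the same as the paper's: the paper fixes $\epsilon>0$, chooses $r$ so that $a_i<(1+\epsilon)^i$ for $i>r$, and then bounds $c_k<a_1^{b_1}\cdots a_r^{b_r}(1+\epsilon)^k$, which is exactly your split into small parts (bounded via $e_j\le b_j$) and large parts (bounded via subexponentiality). The only cosmetic differences are that the paper writes the constant explicitly as $a_1^{b_1}\cdots a_r^{b_r}$ rather than your cruder $e^{MB}$, and it works directly with the base $1+\epsilon$ rather than passing through $\delta=\gamma^{1/2}$.
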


\begin{proof}
For all $\epsilon > 0$, there exists $r$ such that $a_i < (1+\epsilon)^i$ for $i>r$.  Thus
$$c_k < a_1^{b_1}\cdots a_r^{b_r} (1+\epsilon)^k.$$
\end{proof}

\section{Counting polynomials}
In this section, we introduce several sets of polynomials which are candidates for the set $\P$ in Proposition~\ref{Master}.

For $P(x)\in \F_{q^2}[x]$ a monic polynomial with non-zero constant term, we define
$$P^*(x) := \bar P(0)^{-1} x^{\deg P}  \bar P(1/x),$$
where $\bar P$ is the polynomial obtained from $P$ by applying the $q$-Frobenius automorphism to each coefficient.
In particular, if $P(x)\in \F_q[x]$, then $P^*(x) = P(0)^{-1} x^{\deg P} P(1/x)$.
Note that $P^*(x)$ is a monic polynomial of the same degree as $P$.  If $P(x) = \prod_i (x-r_i)$, then 
$$P^*(x) = \prod_i (x-1/\bar r_i) = \prod_i (1-r_i^{-q}).$$
Therefore, if $P=P^*$, then the roots of $P$, taken with multiplicity, form a union of orbits in $\bar\F_q^\times$ under the map
$x\mapsto x^{-q}$.   Any orbit under this map is stable by the $q^2$-Frobenius, so if
$P(x)$ is irreducible in $\F_{q^2}[x]$, its roots form a single orbit under $x\mapsto x^{-q}$.  If $P(x)\in \F_q[x]$ and $P$ is irreducible as a polynomial over $\F_q$, then it may form one orbit or two mutually reciprocal orbits.

Following \cite{LS}, we denote by $\L_n(q)$  the set of monic polynomials $P(x)\in\F_q[x]$ 
of degree $n$ such that $P(0)= (-1)^n$.  We define by $\U_n(q)$ the set of monic polynomials $P(x)\in \F_{q^2}[x]$ of degree $n$ such that
$P = P^*$ and $P(0)= (-1)^n$.  When $n$ is even, we denote by $\O_{n}(q)$  the set of monic polynomials $P(x)\in \F_q[x]$ of degree $n$ such that
$P=P^*$ and $P(0) = 1$.
For $c\in \F_q^\times$, we denote by  $\L_{n,c}(q)$  the set of monic polynomials $P(x)\in\F_q[x]$ 
of degree $n$ such that $P(0)= c$.
Likewise, for $c\in \F_{q^2}^\times$ with $c\bar c=1$,
let $\U_{n,c}(q)$ denote the set of monic polynomials $P(x)\in \F_{q^2}[x]$ 
of degree $n$ such that $P=P^*$ and $P(0)=c$.

\begin{lem}
\label{count}
Let $r$ be a positive integer and $q$ a prime power.  Then
$$|\L_{r+1}(q)| = |\U_{r+1}(q)| = |\O_{2r}(q)| = q^r.$$
\end{lem}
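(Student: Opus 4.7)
The plan is to count each of the three sets directly by enumerating free coefficients. The normalization conditions (monic, prescribed constant term) fix two coefficients, and when the self-reciprocity condition $P = P^*$ is present it pairs up the remaining coefficients under the involution $j \mapsto \deg(P) - j$ on the index set.

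The case $\L_{r+1}(q)$ is immediate: a monic degree $r+1$ polynomial in $\F_q[x]$ has $r+1$ non-leading coefficients, and fixing $a_0 = (-1)^{r+1}$ leaves $a_1, \ldots, a_r$ free in $\F_q$, giving $q^r$. For $\O_{2r}(q)$, I would write $P(x) = \sum_{k=0}^{2r} a_k x^k$ with $a_{2r} = 1$; since the coefficients lie in $\F_q$ we have $\bar P = P$, so $P = P^*$ reduces to $a_j = a_{2r-j}$ for all $j$. This is consistent with $a_0 = 1$ and pairs the middle indices $1, \ldots, 2r-1$ into $r-1$ pairs $\{j, 2r-j\}$ with $a_r$ as a fixed point, so $r$ coefficients in $\F_q$ can be chosen freely, again giving $q^r$.

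The case $\U_{r+1}(q)$ requires slightly more care because the coefficients live in $\F_{q^2}$ and the Frobenius enters via $\bar P$. Expanding $P^* = \bar P(0)^{-1} x^{r+1} \bar P(1/x)$ and using $a_0 = (-1)^{r+1} \in \F_q$, the condition $P = P^*$ translates to $a_j = (-1)^{r+1} \bar a_{r+1-j}$ for all $j$. The involution $j \mapsto r+1-j$ pairs $\{0, r+1\}$ (forcing the already-fixed values compatibly) and acts on the $r$ middle indices $\{1, \ldots, r\}$. If $r$ is even there is no fixed point among these: $r/2$ pairs, each contributing one free coefficient in $\F_{q^2}$, yield $(q^2)^{r/2} = q^r$. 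If $r$ is odd, the fixed index $j = (r+1)/2$ satisfies $a_j = \bar a_j$ (since $(-1)^{r+1} = 1$), forcing $a_j \in \F_q$ and contributing $q$ choices, while the remaining $(r-1)/2$ pairs contribute $q^{r-1}$, again yielding $q^r$.

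There is no serious obstacle here; the only point requiring attention is the parity case split for $\U_{r+1}(q)$, which is handled by writing out the action of $P \mapsto P^*$ on coefficients and carefully tracking the Frobenius twist.
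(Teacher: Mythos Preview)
Your proof is correct and follows essentially the same approach as the paper: both argue by fixing the leading and constant coefficients and then counting free parameters, using the involution $j\mapsto \deg(P)-j$ (coupled with the $q$-Frobenius in the $\U$ case) to pair up the remaining coefficients. Your treatment is slightly more explicit in writing out the coefficient relation $a_j=(-1)^{r+1}\bar a_{r+1-j}$ for $\U_{r+1}(q)$, but the parity split and the resulting count match the paper's argument exactly.
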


\begin{proof}
In each case, the leading coefficient and the constant coefficient are fixed.
For $\L_{r+1}(q)$, the remaining $r$ coefficients can be chosen independently from $\F_q$.
For $\U_{r+1}(q)$, if $0<i<(r+1)/2$, the $x^i$ coefficient can be any element of $\F_{q^2}$ and it uniquely
determines the $x^{r+1-i}$ coefficient.  That finishes the $\U$ case if $r+1$ is odd.  If it is even, the $x^{\frac{r+1}2}$ coefficient can be any element of $\F_q$,
so again $|\U_{r+1}(q)| = q^r$.  For $\O_{2r}(q)$, the $x^i$ coefficients can be chosen independently from $\F_q$ for $0<i\le r$, and the $x^i$ coefficient 
determines the $x^{2r-i}$ coefficient.
\end{proof}

\begin{prop}
\label{polynomial estimate}
There exists a positive real sequence $(\epsilon_i)_{i=1,2,\ldots}$ tending to $0$ such that
all of the following statements hold for all integers $r\ge 1$.
\begin{enumerate}
\item
Let $n=r+1$.  If $m>\sqrt{n}$ and $c\in \F_q^\times$, then the number of elements in $\L_{n,c}(q)$ with an irreducible factor whose degree is divisible by $m$ is less than $\epsilon_r |\L_{n,c}(q)|$.
Likewise, the number of elements with an irreducible factor whose degree is $>\sqrt{n}$ and divides $\ell-1$
for some prime divisor $\ell$ of $n$ is less than $\epsilon_r |\L_{n,c}(q)|$.
\item
Let $n=r+1$. If $m>\sqrt{n}$ and $c\in \F_{q^2}^\times$ with $c\bar c=1$, then the number of elements in $\U_{n,c}(q)$ with an irreducible factor whose degree is divisible by $m$ is less than $\epsilon_r |\U_{n,c}(q)|$.
Likewise, the number of elements with an irreducible factor whose degree is $>\sqrt{n}$ and divides $\ell-1$
for some prime divisor $\ell$ of $n$ is less than $\epsilon_r |\U_{n,c}(q)|$.
\item 
Let $n=2r$.  If $m>\sqrt{n}$, then the number of elements in $\O_{n}(q)$ with an irreducible factor whose degree is divisible by $m$ is less than $\epsilon_r |\O_{n}(q)|$.
\end{enumerate}
\end{prop}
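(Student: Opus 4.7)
The plan is to prove all three statements from a single first-moment bound: in each class, the expected number $E[m_d]$ of irreducible factors of degree $d$ (counted with multiplicity) in a uniformly random element is $O(1/d)$, uniformly in the other parameters. Once this is established, Markov's inequality applied to the relevant events immediately yields the conclusion.

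For $\L_{n,c}(q)$, let $m_d(P) = \sum_{\deg f = d} v_f(P)$ with the sum running over monic irreducibles $f \in \F_q[x]$ of degree $d$ with $f(0) \neq 0$. For each such $f$ and each $k \ge 1$, writing $P = f^k Q$ and invoking the argument of Lemma~\ref{count} for the nonzero constant $c/f(0)^k$ gives $|\{P \in \L_{n,c}(q) : f^k \mid P\}| = q^{n-dk-1}$ when $dk < n$, with the degenerate case $dk = n$ contributing at most $1$ and $dk > n$ contributing $0$. Summing over $f$ and $k$, and using that the number of monic irreducibles of degree $d$ in $\F_q[x]$ is at most $q^d/d$ (together with the standard density-of-irreducibles estimate handling the $d = n$ case), yields $E[m_d] = O(1/d)$. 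For $\U_{n,c}(q)$ and $\O_n(q)$ the analysis is parallel but involves two types of factors, the self-paired irreducibles and pairs $\{f, f^*\}$. In both cases the count of degree-$d$ contributions is $O(q^d/d)$, and the count of class elements divisible by $f^k$ (resp.\ $(f f^*)^k$) is controlled by Lemma~\ref{count} applied to a corresponding smaller subclass, yielding the same bound.

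With this estimate in hand, the first part of each claim follows from
\[
E\Bigl[\sum_{k \ge 1,\, mk \le n} m_{mk}\Bigr] = O\!\Bigl(\frac{1}{m}\sum_{k=1}^{\lfloor n/m \rfloor} \frac{1}{k}\Bigr) = O\!\Bigl(\frac{\log n}{\sqrt n}\Bigr),
\]
valid whenever $m > \sqrt n$, together with Markov's inequality. For the second part of (1) and (2), I bound the expected number of factors of ``Zsigmondy-relevant'' degree---that is, degree $d > \sqrt n$ with $d \mid \ell - 1$ for some prime $\ell \mid n$---by
\[
\sum_{\substack{\ell \mid n \\ \ell \text{ prime}}} \sum_{\substack{d \mid \ell - 1 \\ d > \sqrt n}} \frac{O(1)}{d} \le \frac{O\bigl(\omega(n)\,\max_{\ell \mid n} \tau(\ell - 1)\bigr)}{\sqrt n},
\]
which tends to $0$ since $\omega(n) = O(\log n / \log \log n)$ and $\tau(\ell - 1) = n^{o(1)}$. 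Taking $\epsilon_r$ to be the supremum over $q$ of the sum of these two bounds at $n = r+1$ (respectively $n = 2r$) yields the desired sequence.

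The main obstacle will be the careful bookkeeping in the $\U$ and $\O$ cases. There the self-paired versus pair-paired distinction among irreducibles, together with the requirement that the quotient of $P$ by $f^k$ (or $(f f^*)^k$) again lie in an analogous class with a compatible constant term, must be tracked via the cycle-index machinery in the spirit of Fulman--Guralnick~\cite{FG,FG2}. However, no essentially new ideas are needed beyond those used in the $\L$ case.
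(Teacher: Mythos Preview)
Your proposal is correct and follows essentially the same route as the paper: bound the number of polynomials with an irreducible factor of degree $k$ by $O(q^{n-1}/k)$ via the count $\le q^k/k$ of monic irreducibles of degree $k$, then sum over the relevant $k$; for $\U$ and $\O$, split into the $Q=Q^*$ and $Q\neq Q^*$ cases exactly as the paper does. The only notable differences are cosmetic: you phrase the union bound as an expectation plus Markov, and for the second claim you sum crudely over all prime divisors $\ell\mid n$ using $\omega(n)\tau(\ell-1)=n^{o(1)}$, whereas the paper observes that only the (at most one) prime $\ell>\sqrt n$ can contribute, since $\ell\le\sqrt n$ forces $\ell-1<\sqrt n$ and hence the inner sum is empty.
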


\begin{proof}
A monic irreducible polynomial over $\F_q$ of degree $k$ corresponds to a $q$-Frobenius orbit of length $k$ in $\bar\F_{q}^\times$.
Any such orbits is contained in the $(q^k-1)$-roots of $1$ in $\bar\F_q$, 
so there are less than $q^k/k$ such polynomials.
Therefore, the number of monic polynomials of degree $n$ with constant term $c$ and an irreducible factor of degree $k$ is less than $\frac{q^{n-1}}{k}$.
Summing over multiples of $m$, the number of monic polynomials of degree $n$ with constant term $c$ and an irreducible factor whose degree lies in $m\Z$ is less than
$$\sum_{1\le i\le n/m}\frac{q^{n-1}}{mi} < \frac{q^{r} (1+\log n)}{m} = \frac{|\L_{n,c}(q)|(1+\log n)}{m},$$
which gives the first claim in part (1).  For the second claim,  
we note that $n$ has at most one prime divisor $\ell > \sqrt n$.
So it suffices to prove that the sum of $1/m$ over divisors $m$ of $n$ which are larger than $\sqrt n$ is $o((1+\log n )^{-1})$. 
This follows from the fact that the total number of divisors of any integer $n$ is $n^{o(1)}$.

For (2), we proceed in the same way, using the fact that $|\U_{n,c}(q)| = q^{n-1}$.  If $Q\in \U_{n,c}(q)$ and $P$ divides $Q$, then $P^*$ divides $Q$.  It follows that if $P\neq P^*$, then any
element of $\U_{n,c}(q)$ divisible by $P$ is the product of $P P^*$ and a polynomial in $\U_{n-2k,cP(0)^{q-1}}$.  If $P=P^*$, then any element of $\U_{n,c}(q)$ divisible by $P$ is the product of $P$
and an element of $\U_{n-k,cP(0)^{-1}}$.  The first case gives less than
$$\frac{q^{2k}}k q^{n-2k-1} = \frac{q^{r}}k$$
elements of $\U_{n,c}(q)$.

For the second term, every monic irreducible degree $k$ polynomial $P(x)\in \F_{q^2}[x]$  such that $P=P^*$ corresponds
to a length-$k$ orbit 
$$r,r^{-q},r^{q^2},\ldots,r^{(-q)^k}=r,$$
so $r$ is a $(q^k-(-1)^k)$-root of $1$.  If $k\ge 2$, the $q+1$ fixed points of $x\mapsto x^{-q}$ do not belong to such an orbit, so the number of orbits
is again less than $q^k/k$, thus contributing less than
$$\frac{q^{k}}k q^{n-k-1} = \frac{q^{r}}k$$
elements of $\U_{n,c}(q)$.
The argument therefore goes through as before.

For (3), we follow (2).  The number of elements in $\O_{n}(q)$ is $q^r$.  
If a monic polynomial $P(x)\in \F_q[x]$ satisfies $P\neq P^*$ and $P$ divides $Q\in \O_{n}(q)$, then $Q$ is the product of $PP^*$ and an element
of $\O_{n-2k}(q)$.  For $k\ge 2$, a monic irreducible polynomial $P$ of degree $k$ satisfying $P=P^*$ must be of even degree, and every element of $\O_{n}(q)$
divisible by $P$ is the multiple of $P$ by an element of $\O_{n-k}(q)$.  By the proof of \cite[Prop.~2.6]{LS}, the number of irreducible monic polynomials of degree $k\ge 4$
satisfying $P=P^*$ is the same as the number of monic irreducible polynomials of degree $k/2$, and for $k=2$, the number is at most $q-1$.  Thus, the argument goes through as before.

\end{proof}

For any monic polynomial $P(x)$ over a field $F$, we define $\rho(P)$ to be the sum $\sum_{i=1}^j b_i (a_i-1)$, where $P = P_1^{a_1}\cdots P_j^{a_j}$, and the $P_i$ are pairwise distinct monic irreducible polynomials over $F$ of degree $b_i$.  For a perfect field $F$, $\rho(P)$ does not change if $F$ is replaced by a field extension.

\begin{lem}
\label{Exponential decay}
Let $m$ and $n$ be positive integers.
\begin{enumerate}
\item The number of polynomials $P\in \L_n(q)$ with $\rho(P)\ge m$ is less than $2q^{-m/2}|\L_n(q)|$.
\item The number of polynomials $P\in \U_n(q)$ with $\rho(P)\ge m$ is less than $4q^{-m/2}|\U_n(q)|$.
\item If $n$ is even, the number of polynomials $P\in \O_{n}(q)$ with $\rho(P)\ge m$ is less than $2q^{-m/4}|\O_{n}(q)|$.
\end{enumerate}
\end{lem}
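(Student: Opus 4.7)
The key is a divisibility observation: if $\rho(P) = \sum_i b_i(a_i-1) \geq m$, then $P$ is divisible by $S^2$ for some monic $S$ with $\deg S \geq m/2$---namely $S := \prod_i P_i^{\lfloor a_i/2 \rfloor}$, for which
$$\deg S \;=\; \sum_i b_i \lfloor a_i/2 \rfloor \;\geq\; \tfrac{1}{2}\sum_i b_i(a_i-1) \;=\; \rho(P)/2.$$
In the unitary and orthogonal cases, the self-duality $P = P^*$ forces non-self-dual irreducible factors of $P$ to appear in matched pairs $\{P_i, P_i^*\}$ with identical multiplicities, so this canonical $S$ automatically satisfies $S = S^*$.

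The plan is then to bound
$$|\{P : \rho(P) \geq m\}| \;\leq\; \sum_{\deg S \geq m/2} |\{P : S^2 \mid P\}|,$$
where $S$ ranges over monic polynomials of the appropriate type (arbitrary in case (1); self-dual in cases (2) and (3)). For a fixed $S$ of degree $d$, setting $T := P/S^2$ identifies $\{P : S^2 \mid P\}$ with the set of monic polynomials of degree $n-2d$ whose constant term is $P(0)/S(0)^2$ and which (in the self-dual cases) satisfy $T = T^*$, as inherited from $P = P^*$ and $S = S^*$. A direct extension of Lemma~\ref{count} to arbitrary admissible constant term gives $|\{T\}| = q^{n-2d-1}$ in cases (1) and (2) and $|\{T\}| = q^{(n-2d)/2}$ in case (3).

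It then remains to enumerate admissible $S$'s of each degree and sum a geometric series. In case (1), there are at most $q^d$ monic $S \in \F_q[x]$ of degree $d$, so
$$\sum_{d \geq m/2} q^d \cdot q^{n-2d-1} \;=\; q^{n-1}\sum_{d \geq m/2} q^{-d} \;\leq\; \tfrac{q}{q-1}\, q^{n-1-m/2} \;\leq\; 2 q^{-m/2}|\L_n(q)|.$$
In case (2), the number of monic self-dual $S \in \F_{q^2}[x]$ of degree $d$ is at most $(q+1)q^{d-1}$ (partitioning by the $q+1$ possible values of $S(0)$ with $S(0)\bar S(0)=1$), and the analogous bound is $\tfrac{q+1}{q-1}q^{-m/2}|\U_n(q)| \leq 4 q^{-m/2}|\U_n(q)|$.

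Case (3) is where the weaker exponent $-m/4$ arises and is the main technical step. Here $S \in \F_q[x]$ must be self-reciprocal of degree $d$, and the symmetry $S = S^*$ roughly halves the number of free coefficients, so the count of admissible $S$ is only $O(q^{d/2})$; more precisely, one splits by the parity of $d$ and the forced value of $S(0) \in \{\pm 1\}$, using $|\O_d(q)| = q^{d/2}$ from Lemma~\ref{count} in the dominant case. Inserting this into the sum yields a geometric series in $q^{-d/2}$ whose leading term is $q^{-m/4}$, and careful bookkeeping produces the factor $2$ in the statement. The principal obstacle is precisely this bookkeeping step: the geometric factor $(1-q^{-1/2})^{-1}$ blows up modestly as $q \downarrow 2$, so one must combine it with the small-constant enumeration of self-reciprocal polynomials by constant term and parity of degree—and possibly exploit that self-reciprocal $S$ with $S(0)=-1$ are strictly fewer than those with $S(0)=+1$ in odd characteristic—to keep the overall constant below $2$ uniformly in $q \geq 2$.
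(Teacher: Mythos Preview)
Your approach is essentially identical to the paper's: define $S=\prod_i P_i^{\lfloor a_i/2\rfloor}$, observe $\deg S\ge \rho(P)/2$ and that self-duality is inherited, then count pairs $(S,P/S^2)$ and sum a geometric series. Your caution about the constant in case~(3) is well placed---the paper simply asserts $\sum_{i\ge m/2} q^{i/2}q^{(n-2i)/2}<2q^{n/2-m/4}$ without the parity/constant-term bookkeeping you flag, so you are being more careful on exactly the point the paper glosses over.
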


\begin{proof}
Let 
$$Q = \prod_{i=1}^j P_i^{\lfloor a_i/2\rfloor},\ R = \frac{P}{Q^2}.$$
For claim (2) (resp.\ (3)), if $P_i^* = P_j$, then $a_i = a_j$, so the multiplicities of $P_i$ and $P_j$ in $Q$ (or in $R$) are the same.  Therefore, $Q\in \U_{\deg Q,Q(0)}(q)$ and $R\in \U_{\deg R,P(0)Q(0)^{-2}}(q)$
(resp.\ $Q\in \O_{\deg Q}(q)$ and $R\in \O_{\deg R}(q)$.)  
As 
$$\sum_i  b_i\lfloor a_i/2\rfloor \ge \frac 12\sum_i b_i(a_i-1),$$
we have $\deg Q \ge \rho(P)/2$ and $2\deg Q + \deg R = n$.

For $\L_n(q)$, the total number of possibilities for $(Q,R)$ with $\rho(P)\geq m$ is therefore at most
$$\sum_{i\ge m/2}q^{i}q^{n-2i-1} < 2q^{n-1-m/2}.$$
For $\U_n(q)$, there are $q+1$ elements $c\in\F_{q^2}$ with $c\overline{c}=1$, and for each of these $|\U_{k,c}|=q^{k-1}$, so the total
number of possibilities for $(Q,R)$ with $\rho(P)\geq m$ is at most
$$\sum_{i\ge m/2}(q+1)q^{i-1}q^{n-2i-1} < 4q^{n-1-m/2}.$$
For $\O_n(q)$, the number of possibilities is at most 
$$\sum_{i\ge m/2}q^{i/2}q^{(n-2i)/2} < 2q^{n/2-m/4}.$$
\end{proof}

With $a_i$, $b_i$, and $j$ as above, we define 
$$\alpha(P) := \frac{\prod_{i=1}^j (1+q^{-b_i})}{1-q^{-1}}.$$

\begin{lem}
\label{Undersized classes are rare}

For all $\epsilon > 0$ there exists $M$ such that for all $n\ge 1$ and all prime powers $q$:
\begin{enumerate}
\item The number of polynomials $P\in \L_n(q)$ with $\alpha(P)<(1+q^{-1})^M$ is greater than $(1-\epsilon)|\L_n(q)|$.
\item The number of polynomials $P\in \U_n(q)$ with $\alpha(P)<(1+q^{-1})^M$ is greater than $(1-\epsilon)|\U_n(q)|$.
\item If $n$ is even, the number of polynomials $P\in \O_{n}(q)$ with $\alpha(P)<(1+q^{-1})^M$ is greater than $(1-\epsilon)|\O_{n}(q)|$.
\end{enumerate}
\end{lem}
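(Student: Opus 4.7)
Plan: I would prove the lemma by applying Markov's inequality to $\log\alpha(P)$ rather than to $\alpha(P)$ itself.  A direct Markov bound on $\alpha(P)$ is of no use here: since $(1+q^{-1})^M\to 1$ as $q\to\infty$ for fixed $M$, one really needs to show that $\log\alpha(P)$ is typically of order $q^{-1}\asymp\log(1+q^{-1})$, not merely that $\alpha(P)$ is typically bounded.

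The central estimate I would prove is that there is an absolute constant $C$ such that, uniformly in $n$, $q$, and in the choice of $\cX\in\{\L_n(q),\U_n(q),\O_n(q)\}$ (with $n$ even in the last case),
$$\frac{1}{|\cX|}\sum_{P\in\cX}\log\alpha(P)\;\le\; C\log(1+q^{-1}).$$
Given this, since
$$\log\alpha(P)=-\log(1-q^{-1})+\sum_{i=1}^{j}\log(1+q^{-b_i})\ge 0,$$
Markov's inequality immediately yields
$$\Prob\bigl(\log\alpha(P)\ge M\log(1+q^{-1})\bigr)\le\frac{C}{M},$$
and the lemma follows in all three cases by taking $M\ge C/\epsilon$.

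To establish the central estimate I would use $\log(1+x)\le x$ together with $-\log(1-q^{-1})\le 2\log(1+q^{-1})$ (elementary for $q\ge 2$), which reduces everything to an upper bound on
$$\frac{1}{|\cX|}\sum_{P\in\cX}\sum_{i=1}^{j}q^{-b_i}\;=\;\sum_{P_0}\Prob(P_0\mid P)\cdot q^{-\deg P_0},$$
where $P_0$ ranges over monic irreducibles over the appropriate field ($\F_q$ in cases (1) and (3), $\F_{q^2}$ in case (2)). In case (1) the counting argument of Lemma~\ref{count} gives $\Prob(P_0\mid P)=q^{-\deg P_0}$ for $\deg P_0\le n-1$, and since there are at most $q^{b}/b$ monic irreducibles of degree $b$, the sum is bounded by $\sum_b q^{-b}/b=-\log(1-q^{-1})$. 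For case (2) I would split according to whether $P_0$ is self-dual: a non-self-dual $P_0$ satisfies $P_0\mid P\Rightarrow P_0P_0^*\mid P$ and so divides $P$ with probability $q^{-2b}$, while self-dual $P_0$ are at most $q^{b}/b$ in number (as in the proof of Proposition~\ref{polynomial estimate}(2)) and divide $P$ with probability $q^{-b}$; both classes contribute $\lesssim\sum_b q^{-b}/b$. In case (3), $|\O_n(q)|=q^{n/2}$ alters the self-dual probability to $q^{-b/2}$, but self-dual monic irreducibles of (even) degree $b\ge 4$ number only $\lesssim q^{b/2}/b$ by \cite[Prop.~2.6]{LS} (as invoked in the proof of Proposition~\ref{polynomial estimate}(3)), so both contributions remain $\lesssim q^{-b}/b$.

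The main technical obstacle is the careful bookkeeping in cases (2) and (3) around the involution $P\mapsto P^*$; in particular, self-dual monic irreducibles of degrees $b=1,2$ deviate from the generic formulas above, but a short case analysis shows their total contribution is $O(q^{-1})$ and is absorbed into the absolute constant $C$.
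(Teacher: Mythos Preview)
Your argument is correct, and it takes a genuinely different route from the paper's. The paper observes first that there is an absolute constant $C$ with $(1-q^{-1})^{-1}\prod_{i\ge1}(1+q^{-i})<(1+q^{-1})^C$, so that any $P$ with fewer than $e$ distinct irreducible factors of each fixed degree satisfies $\alpha(P)<(1+q^{-1})^{C(e-1)}$; it then bounds the fraction of $P$ that \emph{do} have $e$ distinct irreducible factors of some common degree $k$ by $\sum_k k^{-e}/e!\le\zeta(2)/e!$, which is $<\epsilon$ once $e$ is large. Your approach instead computes the first moment $\mathbf E[\log\alpha(P)]$ by linearity over irreducible divisors and shows it is $O(\log(1+q^{-1}))$, then applies Markov. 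Both arguments rest on the same underlying counts (number of irreducibles of given degree, divisibility probabilities, and the self-dual/non-self-dual dichotomy in cases (2) and (3)); the paper's reduction packages these counts combinatorially, while yours packages them analytically. One incidental difference: the paper's method yields $M=O\bigl(\log(1/\epsilon)/\log\log(1/\epsilon)\bigr)$ via $e!>\zeta(2)/\epsilon$, whereas your Markov bound gives $M=O(1/\epsilon)$, but the lemma does not ask for this dependence.
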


\begin{proof}
As there exists $C$ such that $(1-q^{-1})^{-1}\prod_{i\geq 1} (1+q^{-i})<(1+q^{-1})^C$ for all $q\geq 2$, 
it suffices to prove that there exists $e\ge 2$ such that the fraction of polynomials $P$ in $\L_n(q)$ (resp.\ $\U_n(q)$ or $\O_{n}(q)$)
divisible by $e$ different irreducible polynomials of the same degree is less than $\epsilon$.  The number of monic irreducible
polynomials of degree $k$ in $\F_q[x]$ is less than $q^k/k$, so the number of sets $\{Q_1,\ldots,Q_k\}$ of $e$ such polynomials is at most $k^{-e} q^{ek}/e!$.
For each possibility, there are $q^{n-ek-1}$ choices for the remaining factor $P/\prod_i Q_i$, so there are at most $(k^{-e}/e!) q^{n-1}$ elements of $\L_n(q)$ with
$e$ distinct irreducible degree $k$ factors.  Summing over $k$, we get an upper bound of $\zeta(2)q^{n-1}/e!$, and $\zeta(2)/e!$ goes to $0$ as
$e$ goes to $\infty$.

For statements (2) and (3) we consider factors of degree $k$ which are either of the form $Q$, where $Q=Q^*$ is irreducible (in $\F_{q^2}[x]$ or $\F_q[x]$ respectively)
or which are of the form $Q Q^*$, where $Q$ is of degree $k/2$ and $Q\neq Q^*$.
As in Lemma~\ref{polynomial estimate}, the two cases together contribute less than $2q^k/k$ possibilities.
The number of $e$-element sets of such polynomials is therefore less than $(2k)^{-e} q^{ek}/e!$, and the argument goes through as before.
\end{proof}

\begin{lem}
\label{A orbits}
Let $n\ge 2$ be an integer and $q$ a prime power.  The number of elements $P\in \L_n(q)$ such that
$$P(\zeta x)\equiv P(x)$$
for some $\zeta \in \F_q \setminus \{1\}$ is less than $2q^{n/2-1}$.
Likewise, the number of elements $P\in \U_n(q)$ such that
$$P(\zeta x)\equiv P(x)$$
for some $\zeta\in \F_{q^2}\setminus\{1\}$ is less than $2q^{n/2-1}$.
\end{lem}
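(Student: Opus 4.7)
The plan is to exploit the fact that invariance of $P$ under a non-trivial scaling $\zeta$ forces $P$ to be a polynomial in a higher power of $x$, which drastically cuts down its degrees of freedom; after that, a union bound over prime orders of $\zeta$ followed by a geometric-series estimate should deliver the stated inequality.

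First I would observe that if $\zeta$ has multiplicative order $d > 1$ and $P(\zeta x) = P(x)$, then comparing coefficients forces every nonzero coefficient of $P$ to sit in a degree divisible by $d$; since $P$ is monic of degree $n$, this gives $d \mid n$ and $P(x) = Q(x^d)$ for a unique monic $Q$ of degree $n/d$. Since $P$ is also invariant under every power of $\zeta$, I may replace $\zeta$ by $\zeta^{d/p}$ for a prime $p \mid d$; so the set of bad polynomials in $\L_n(q)$ is the union
$$\bigcup_{p\,\text{prime},\; p\mid \gcd(n,\,q-1)} A_p, \qquad A_p := \{\,Q(x^p)\in\L_n(q)\,\}.$$
Next I would count each $A_p$: the constraint $P(0)=(-1)^n$ becomes $Q(0)=(-1)^n$, and $Q$ is otherwise a free monic polynomial of degree $n/p$, so $|A_p| = q^{n/p-1}$. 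Applying the union bound and the reindexing $k = n/d$,
$$\Bigl|\bigcup_p A_p\Bigr| \le \sum_{p\mid n} q^{n/p-1} \le \sum_{d\mid n,\,d\ge 2} q^{n/d-1} = \sum_{k\mid n,\,k\le n/2} q^{k-1} \le \sum_{k=1}^{\lfloor n/2\rfloor} q^{k-1} = \frac{q^{\lfloor n/2\rfloor}-1}{q-1}.$$
A case split on the parity of $n$ then reduces the inequality $<2q^{n/2-1}$ to $q/(q-1)\le 2$ for $n$ even and $q^{1/2}<2(q-1)$ for $n$ odd, both of which hold for every $q\ge 2$.

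For $\U_n(q)$ I would run the same argument. The only extra input needed is that if $P(x) = Q(x^d)$, then a direct substitution gives $P^*(x) = Q^*(x^d)$, so $P = P^*$ if and only if $Q = Q^*$. Hence $Q$ ranges freely over $\U_{n/p,\,(-1)^n}(q)$, a set that has cardinality $q^{n/p-1}$ by the same dimension-count used in Lemma~\ref{count} (noting $(-1)^n \cdot \overline{(-1)^n}=1$). The bound in the union step then carries over with $\gcd(n,q-1)$ replaced by $\gcd(n,q^2-1)$, and the geometric estimate is literally the same.

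I expect the only mildly delicate step to be the verification $P^*(x)=Q^*(x^d)$ in the unitary case, which involves bookkeeping with the $q$-Frobenius and the reciprocation map; everything else is coefficient counting and an elementary estimate of a finite geometric series.
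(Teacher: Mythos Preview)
Your proof is correct and follows essentially the same route as the paper: observe that invariance under $x\mapsto\zeta x$ forces $P(x)=Q(x^d)$ with $d\mid n$, count such $Q$ as $q^{n/d-1}$, and bound the resulting sum by a geometric series below $2q^{n/2-1}$. Your version is slightly more careful in two respects---reducing to prime order and explicitly verifying $P^*(x)=Q^*(x^d)$ in the unitary case---but the argument is the same in substance.
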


\begin{proof}
In both cases, if $P(\zeta x)\equiv P(x)$ for some $\zeta\neq 1$,
then by comparing coefficients, the order $d>1$ of $\zeta$ divides
 $n$, and $P(x)=P'(x^d)$ for some element
$P'\in\L_{n/d,(-1)^n}(q)$ (resp.\ $\U_{n/d,(-1)^n}(q)$), for which there are
$q^{n/d-1}$ possibilities.  So the number of $P\in\L_n(q)$ (resp.\ $\U_n(q)$)
with $P(\zeta x)\equiv P(x)$ for some $\zeta\in\F_q\setminus \{1\}$
(resp.\ $\F_{q^2}\setminus \{1\}$) is at most
$$\sum_{2\le i\le n/2}q^{n/i-1} < 2q^{n/2-1}.$$

\end{proof}

\section{Classical groups}

Finite simple groups $G$ of rank $r>8$ must be of type $A_r$, $^2A_r$, $B_r$, $C_r$, $D_r$, or $^2D_r$.  In each case, $G$ is closely related to a \emph{classical group} $G'$, which we define below.  We also define the sets $\A$, $\At$, $\B$, $\Bt$, and $\P$ used in Proposition~\ref{Master}.

In every case, $\A$ denotes the set of conjugacy classes of $G$ and
$\At$, the set of conjugacy classes of the universal central extension $\tilde G$ of $G$.  
We denote by $Z$ the center of $\tilde G$, so that we can think of $|Z|$ as the ``generic''
size of the fibers of
the map $\phi$ obtained from the covering homomorphism $\tilde G\to G$ by taking conjugacy classes.
The map $f$ takes a conjugacy class of $\tilde G$ to its characteristic polynomial, with a slight modification when $G$ is of type B.

We can regard $G$ as the commutator group $[\uG_{\ad}(\F_q),\uG_{\ad}(\F_q)]$, where
$\uG_{\ad}$ is an adjoint simple algebraic group defined over $\F_q$.  We define $\B := \Irr(G)$, while $\Bt$  denotes the set of pairs $(\chi,\chi_{\ad}) \in \Irr(G)\times \Irr(\uG_{\ad}(\F_q))$
such that $\langle \chi,\Res_G \chi_{\ad}\rangle_G \ge 1$.  We define $\psi(\chi,\chi_{\ad}) := \chi$.  The Lusztig classification (see \S8 below) assigns to each character $\chi_{\ad}$
a semisimple conjugacy class in the $\F_q$-points of the dual group to $\uG_{\ad}$.  This is a simply connected simple algebraic group of classical type, so it has a natural representation,
and we define $g((\chi,\chi_{\ad}))$ to be the characteristic polynomial of this semisimple class in this natural representation, with a slight modification in the case that $G$ is of type C.

We divide into cases.  A reference for dual groups for the various groups of classical types is \cite[\S2]{Carter}.

\medskip\noindent \textbf{Case A.}  In this case, $G$ must be of the form $\PSL_n(\F_q)$
or $\PSU_n(\F_q)$, where $n=r+1$.  We define $G'$ to be  $\SL_n(\F_q)$ or $\SU_n(\F_q)$
respectively.  As $\uG_{\ad}$ is $\PGL_n$ or $\PGU_n$ respectively, the dual group $(\uG_{\ad})^*$ is $\SL_n$ or $\SU_n$ respectively,
and $\P=\L_n(q)$.  We have $|Z|\le n$.

\medskip\noindent \textbf{Case B.} 
In this case, $G$ is of the form $\Omega_n(\F_q)$, where $n=2r+1$, and we define $G'$ to be $\SO_n(\F_q)$.  In this case, $G'$ is a subgroup of $G$ of index $\le 2$. 
As $\uG_{\ad} = \SO_n$, the dual group $(\uG_{\ad})^*$ is $\Sp_{2r}$.  The characteristic polynomial of every element of $\Sp_{2r}(\F_q)$ lies in $\P := \O_{2r}(q)$.
The characteristic polynomial of every element of $\SO_n(\F_q)$ is $(x-1)$ times an element of $\O_{2r}(q)$, and we define $f$ as the composition of  $\tilde G\to G$,
$G\to \GL_n(\F_q)$, the characteristic polynomial map, and division by $(x-1)$. 
We have $|Z|\le 2$.

\medskip\noindent \textbf{Case C.} 
In this case, $G$ is of the form $\PSp_n(\F_q)$, where $n=2r$, and we define $G' $ to be $\Sp_n(\F_q)$, so $G$ is the quotient of $G$ by a normal subgroup of order $\le 2$,
and $f$ is defined via the usual map $\Sp_n(\F_q)\to \O_n(\F_q)$.
As $\uG_{\ad} = \PGSp_n$, the dual group $(\uG_{\ad})^*$ is $\Spin_{2r+1}$.  We define the map $g$ by composing the maps $\Spin_{2r+1}(\F_q)\to \SO_{2r+1}(\F_q)$,
$\SO_{2r+1}(\F_q)\to \GL_{2r+1}(\F_q)$, the characteristic polynomial map, and division by $(x-1)$.
We have $|Z|\le 2$.

\medskip\noindent \textbf{Case D.} 
In this case, $G$ is of the form $P\Omega^{\pm}_n(\F_q)$, where $n=2r$, and we define $G' $ to be $\SO^{\pm}_n(\F_q)$,  so $G$ is the quotient of a subgroup $\Omega^{\pm}_n(\F_q)$ of index $\le 2$ in $G'$ by a normal subgroup of order $\le 2$.  As $\uG_{\ad} = \PO^\pm_n$, the dual group $(\uG_{\ad})^*$ is $\Spin^\pm_n$.  Both $f$ and $g$ are defined by composing
$\Spin^\pm_n(\F_q)\to \SO^\pm_n(\F_q)$, $\SO^\pm_n(\F_q)\to \GL_n(\F_q)$, and the characteristic polynomial map, which sends orthogonal $n\times n$ matrices to elements of
$\O_{n}(q)$.  Note that in this case $f$ and $g$ are not surjective.  We have $|Z|\le 4$.

\begin{lem}
\label{Conditions 7 and 8}
In all four cases, conditions (7) and (8) of Proposition~\ref{Master} hold if $N>2$.
\end{lem}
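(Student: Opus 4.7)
My plan is to establish $|\P|\le|\At|$ and $|\P|\le|\Bt|$ in each of the four cases A--D by comparing all three quantities to $q^r$, where $r$ is the rank of $G$. The central input is Steinberg's theorem: for a connected simply connected semisimple algebraic group of rank $r$ defined over $\F_q$, the number of $\F_q$-rational semisimple conjugacy classes equals $q^r$. Since Lemma~\ref{count} already gives $|\P|=q^r$ in each case, both conditions reduce to the lower bounds $|\At|\ge q^r$ and $|\Bt|\ge q^r$.

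For condition (7), I would observe that $\tilde G$ is by definition the universal central extension of $G$, and in all four cases this is the group of $\F_q$-points of a simply connected simple algebraic group of rank $r$: $\SL_{r+1}$, $\SU_{r+1}$, $\Spin_{2r+1}$, $\Sp_{2r}$, or $\Spin_{2r}^{\pm}$. Steinberg's theorem therefore supplies at least $q^r$ semisimple conjugacy classes in $\tilde G$, and since $|\At|$ counts all conjugacy classes of $\tilde G$, we get $|\At|\ge q^r=|\P|$.

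For condition (8) I would argue in two steps. First, the projection $(\chi,\chi_{\ad})\mapsto\chi_{\ad}$ from $\Bt$ onto $\Irr(\uG_{\ad}(\F_q))$ is surjective: for any $\chi_{\ad}\in\Irr(\uG_{\ad}(\F_q))$ the restriction $\Res_G\chi_{\ad}$ is a nonzero character of the normal subgroup $G=[\uG_{\ad}(\F_q),\uG_{\ad}(\F_q)]$, hence admits an irreducible constituent $\chi$, and then $(\chi,\chi_{\ad})\in\Bt$. Thus $|\Bt|\ge|\Irr(\uG_{\ad}(\F_q))|$. Second, the Lusztig classification partitions $\Irr(\uG_{\ad}(\F_q))$ into nonempty rational Lusztig series indexed by semisimple conjugacy classes of the dual group $(\uG_{\ad})^*(\F_q)$, so $|\Irr(\uG_{\ad}(\F_q))|$ is bounded below by the number of such semisimple classes. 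The preceding case analysis identifies $(\uG_{\ad})^*$ as a simply connected simple algebraic group of rank $r$ in every case ($\SL_{r+1}$, $\SU_{r+1}$, $\Sp_{2r}$, $\Spin_{2r+1}$, or $\Spin_{2r}^{\pm}$), so a second application of Steinberg's theorem yields at least $q^r$ semisimple classes in the dual, giving $|\Bt|\ge q^r=|\P|$.

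I do not anticipate a serious obstacle; the argument is purely structural. The only care required is the case-by-case bookkeeping that confirms both $\tilde G$ and $(\uG_{\ad})^*$ are simply connected of rank $r$, which is already spelled out in the material preceding the lemma. The hypothesis $N>2$ plays no direct role in verifying conditions (7) and (8) as stated; it is merely a compatibility condition on the absolute constant $N$ from Proposition~\ref{Master} that enters through the separate conditions (5) and (6).
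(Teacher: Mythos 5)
Your proof is correct, and it shares the overall skeleton of the paper's argument: reduce both conditions to the lower bounds $|\At|\ge q^r$ and $|\Bt|\ge q^r$ (with $|\P|=q^r$ from Lemma~\ref{count}), and obtain the second via surjectivity of the projection $\Bt\to\Irr(\uG_{\ad}(\F_q))$. The key input differs, though. The paper simply cites Fulman--Guralnick \cite[Theorem~1.1]{FG}, which gives $k(\tilde G)\ge q^r$ and $k(\uG_{\ad}(\F_q))\ge q^r$ directly, making the proof two citations long. You instead invoke Steinberg's theorem that a simply connected semisimple group of absolute rank $r$ over $\F_q$ has exactly $q^r$ semisimple conjugacy classes of $\F_q$-points: for (7) this bounds $k(\tilde G)$ below by the semisimple classes of $\tilde G=\uG_{\sc}(\F_q)$, and for (8) you count nonempty, pairwise disjoint Lusztig series indexed by semisimple classes of the simply connected dual $(\uG_{\ad})^*(\F_q)$. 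Both routes are sound; yours trades the Fulman--Guralnick reference for a more classical one (Steinberg is already in the bibliography) at the cost of needing two extra facts that the paper gets for free from the cruder all-classes count, namely that the Lusztig series partition $\Irr(\uG_{\ad}(\F_q))$ and that each series is nonempty (it contains the semisimple character attached to the trivial unipotent character of the centralizer). Your closing remark is also right: the hypothesis $N>2$ plays no role in (7) and (8).
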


\begin{proof}
By \cite[Theorem~1.1]{FG}, $k(\tilde G) \ge q^r$.  By Lemma~\ref{count}, 
$$|\At| = k(\tilde G) \ge q^r = |\P|$$
This implies condition (7).   As the projection map from $\Bt$ to $\Irr(\uG_{\ad}(\F_q))$ is surjective, 
$$|\Bt| \ge \Irr(\uG_{\ad}(\F_q)) = k(\uG_{\ad}(\F_q)) \ge q^r = |\P|,$$
again by \cite[Theorem~1.1]{FG}.

\end{proof}

\begin{prop}
Let $G'\subset \GL_n(\F_{q^2})$ be a classical group.  The characteristic polynomial of every element of $g$ belongs to $\L_n(q)$, $\U_n(q)$,  $(x-1)\O_{n-1}(q)$, $\O_n(q)$, $\O_n(q)$, or $\O_n(q)$
if $G'$ is of type $A$, $^2A$, $B$, $C$, $D$, or $^2D$ respectively.  Moreover, for each such element, there exist at most $4$ semisimple conjugacy classes in $G'$ whose elements
have this characteristic polynomial.
\end{prop}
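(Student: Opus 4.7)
The first claim is a case check. Writing $P(x) = \det(xI-g)$ for $g \in G'$: in type $A$ and $^2A$, the condition $\det g = 1$ gives $P(0) = (-1)^n$ immediately, and for $^2A$ preservation of the Hermitian form forces $g^{-1}$ to be $\GL_n(\F_{q^2})$-conjugate to $\bar g^\top$ via the Gram matrix, so $P = P^*$. For the remaining types, preservation of a bilinear form similarly gives $P = P^*$; in the symplectic and even-orthogonal cases ($C$, $D$, $^2D$), the determinant-one condition and even degree yield $P(0) = 1$ and place $P$ directly in $\O_n(q)$. The only subtlety is type $B$: the odd degree combined with $P = P^*$ forces at least one self-reciprocal eigenvalue ($\pm 1$), and a short parity count using $\det g = 1$ (the $-1$ eigenvalues occur in even count, leaving the $+1$ count odd) shows $(x-1) \mid P$; writing $P(x) = (x-1)Q(x)$, a direct computation converts the identity $P = P^*$ together with $P(0) = -1$ into $Q = Q^*$ and $Q(0) = 1$, so $Q \in \O_{2r}(q)$.

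For the second claim, view $G'$ as the Frobenius-fixed points of the corresponding classical algebraic group $\underline{G}'$ over $\overline{\F_q}$. By Witt's extension theorem, any two semisimple elements of $\underline{G}'$ with the same characteristic polynomial are already conjugate in $\underline{G}'$, so all semisimple $G'$-classes with a given characteristic polynomial lie in a single geometric class. By Lang--Steinberg, the number of $G'$-classes inside a single geometric class equals the number of Frobenius-conjugacy classes in the component group $A(g) := \pi_0(C_{\underline{G}'}(g))$, and in particular is bounded above by $|A(g)|$.

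It then remains to bound $|A(g)|$ by $4$. For $\underline{G}' \in \{\SL_n, \SU_n, \Sp_{2r}\}$, which are simply connected, centralizers of semisimple elements are connected and $|A(g)| = 1$. In the orthogonal cases $\SO_{2r+1}$ and $\SO^{\pm}_{2r}$, the centralizer of a semisimple $g$ decomposes as the intersection with $\SO$ of a product of $\GL$-factors on the Frobenius-paired non-self-reciprocal eigenvalue blocks (which contribute nothing to $\pi_0$) and $\O$-factors on the $\pm 1$-eigenspaces. Each of the at most two $\O$-factors contributes at most a $\Z/2$ to the component group, so $|A(g)| \le 4$. The main technical point I expect is this orthogonal component-group computation --- specifically, being careful about how intersecting the product with $\SO$ interacts with the $\O/\SO$ quotients on the individual $\pm 1$-eigenspaces so that the bound $4$ is preserved rather than inflated or (less consequentially) overestimated.
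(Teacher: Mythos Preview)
Your treatment of the first claim is fine and matches the paper, which simply cites the literature.

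For the second claim, there is a genuine gap in types $D$ and $^2D$. Witt's extension theorem gives conjugacy only in the full orthogonal group $O_{2r}$, not in $\SO_{2r}$; in even dimension these differ, since the coset $O_{2r}\setminus \SO_{2r}$ does not act trivially by conjugation. Concretely, if no eigenvalue of $s$ equals $\pm 1$, then two diagonal elements differing by a single inversion $x_r \leftrightarrow x_r^{-1}$ share a characteristic polynomial but lie in distinct geometric $\SO_{2r}$-classes, because the Weyl group of type $D_r$ contains only even sign changes. So there can be $2$ geometric classes per characteristic polynomial in type $D$, not $1$. (In type $B$ your claim happens to be safe: $O_{2r+1} = \SO_{2r+1} \times \{\pm I\}$, so $O$- and $\SO$-conjugacy coincide.)

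Correspondingly, your bound $|A(g)| \le 4$ in the orthogonal cases is not tight: since the covering $\Spin_n \to \SO_n$ has degree $2$, Steinberg's theorem already yields $|A(g)| \le 2$. Had you carried through the ``intersection with $\SO$'' step you flagged, you would have found that of the at most $(\Z/2)^2$ components coming from the $O$-factors on the $\pm 1$-eigenspaces, only the kernel of the product-of-determinants map survives in $\SO$, giving $|A(g)|\le 2$.

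The paper's proof uses exactly this $2 \times 2$ decomposition: at most $2$ geometric classes share a characteristic polynomial (argued by comparing $T/W$ with the diagonal torus of $\GL_n$ modulo $S_n$), and each geometric class meets $G'$ in at most $2$ rational classes (Steinberg's bound on the component group, then Lang). Your two inaccuracies happen to cancel to the correct bound $4$, but as written the argument does not establish it in type $D$.
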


\begin{proof}
The first part is well-known; see, e.g., \cite{LS}.  The second part follows from the following two claims.  First, we assert the map from the variety of semisimple conjugacy classes of the underlying linear, unitary, symplectic, or orthogonal algebraic group to the variety of conjugacy classes of $\GL_n$ is at most $2$ to $1$.  Second, we assert that the elements of $G'$ in any semisimple conjugacy class of the underlying algebraic group $\uG$ split into at most $2$ $G'$-conjugacy classes.  

For the first assertion, we may work over $\F_q$ and fix a maximal torus $\uT$ of $\uG$ which lies in the maximal torus $D$ of diagonal elements in $\GL_n$.  Let $W$ denote the Weyl group 
of $\uG$ with respect to $\uT$ and consider the map $T/W\to D/S_n$.  We claim that for any $t\in \uT$, there are at most $2$ different $W$-orbits in $\uT\cap t^{S_n}$.  This is obvious for type A.  If $n=2r$, two $n$-tuples of the form
$$(x_1,x_1^{-1},\ldots,x_r,x_r^{-1})$$
are the same up to rearrangement if and only if the multisets 
$$\{\{x_1,x_1^{-1}\},\ldots,\{x_r,x_r^{-1}\}\}$$
are the same, and this implies that the $n$-tuples lie in the same $(\Z/2\Z)^r\rtimes S_r$-orbit.
This shows that the map $T/W\to D/S_n$ is one-to-one in case C and at most $2$ to $1$ in case D.
If $n=2r+1$, then two $n$-tuples
$$(x_1,x_1^{-1},\ldots,x_r,x_r^{-1},1)$$
are the same up to rearrangement if and only if the $n$-tuples lie in the same $(\Z/2\Z)^r\rtimes S_r$-orbit, so again the map is one-to-one.

For the second assertion, we use the fact that the map from the universal cover of $\uG$ to $\uG$ is at most $2$ to $1$.  From Steinberg's theorem \cite[Theorem~9.1]{Steinberg} it follows 
if $\uZ_s$ is the centralizer of a semisimple element in $\uG$, then $\uZ_s/\uZ_s^\circ$ is 
of order $1$ or $2$.
By Lang's theorem, it follows that there are at most two $G'$-conjugacy classes of elements in $G'$ conjugate to $s$ under $\uG$.

\end{proof}

\section{Unipotent conjugacy classes}

\begin{lem}
\label{Simple Growth}
The sequence whose $r$th term is maximum number of unipotent conjugacy classes in any 
classical group of rank $r$ over any field $\F_q$ has subexponential growth.
\end{lem}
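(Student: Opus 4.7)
The plan is to parametrize unipotent conjugacy classes by combinatorial data bounded in terms of partitions, and then invoke subexponentiality of the partition function via Lemma~\ref{subexp}. First I would pass from the simple group $G$ of rank $r$ to the associated matrix group $G' \in \{\SL_n(\F_q),\SU_n(\F_q),\Sp_n(\F_q),\SO^{\pm}_n(\F_q)\}$ of \S5, with $n\le 2r+1$. Since $G$ and $G'$ differ only by central quotients and index-$\le 2$ subgroups (with $|Z|\le 4$ in every case), the number of unipotent classes in $G$ is bounded by a constant multiple of the number of unipotent classes in $G'$, and multiplication by a constant preserves subexponentiality.

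For types $A$ and $^2A$, unipotent classes in $\SL_n(\F_q)$ and $\SU_n(\F_q)$ are parametrized by partitions of $n$ (up to a further bounded factor coming from the center), so their number is at most $n \cdot p(n)$. Applying Lemma~\ref{subexp} with $a_i = 1$ for all $i$ shows that $p(n)$ is subexponential, and the polynomial factor $n$ is absorbed into the subexponential growth.

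For types $B$, $C$, $D$, $^2D$ in odd characteristic, unipotent classes in $\Sp_n(\F_q)$ and $\SO^{\pm}_n(\F_q)$ are classically parametrized (via Jordan block decomposition together with discriminant data on the individual generalized eigenspaces) by a partition $\lambda$ of $n$ satisfying a parity constraint on part multiplicities, decorated by a sign on each distinct part of designated parity. Their number is therefore bounded by $p(n)\cdot 2^{d(\lambda)}$, where $d(\lambda)$ denotes the number of distinct parts of $\lambda$. Since any partition of $n$ has $d(\lambda)\le \sqrt{2n}$, the factor $2^{d(\lambda)}$ is at most $2^{\sqrt{2n}}$, which is subexponential in $r$. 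As the termwise product of two subexponential sequences is subexponential (noted in \S3), the claim follows in these cases.

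The remaining case is characteristic~$2$ in types $B$, $C$, $D$, handled uniformly via Wall's classification of unipotent classes in symplectic and orthogonal groups: the additional local combinatorial data contributes only a further factor of $2^{O(\sqrt r)}$, so the class count is still bounded by a subexponential function of $r$. The main obstacle is thus verifying that the bound in the bad characteristic case is indeed uniform in $q$; once the parametrization is unpacked, the combinatorics is dominated by partitions in essentially the same way as in good characteristic, and the subexponential estimate proceeds as above.
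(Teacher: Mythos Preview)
Your proof is correct and follows essentially the same route as the paper's: both arguments parametrize unipotent classes by partitions decorated with sign data, bound the decoration by $2^{O(\sqrt n)}$ via the distinct-parts estimate, and invoke the subexponentiality of $p(n)$. The only differences are cosmetic---your opening reduction from $G$ to $G'$ is unnecessary since the lemma is already stated for the matrix groups $G'$, and in characteristic~$2$ the paper bounds the class count more directly by the $z^r$-coefficient of $\prod_i(1-z^i)^{-4}$ (ordered quadruples of partitions, subexponential by Lemma~\ref{subexp}) rather than by appealing to a $2^{O(\sqrt r)}$ factor from Wall's classification.
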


\begin{proof}
By \cite[Prop.~2.1]{GLO}, the number of unipotent conjugacy classes in $\SL_n(\F_q)$ is $\le n p(n)$.
By \cite[Prop.~2.2]{GLO}, the same bound applies for $\SU_n(\F_q)$.
By \cite[Prop.~2.3]{GLO}, for a symplectic group of rank $r$, the number of unipotent conjugacy classes is
the sum of $2^{a_\lambda}$ over partitions $\lambda$ of $2r$, where $a_\lambda$ denotes the number of distinct even parts.
Since the sum of $a_\lambda$ distinct positive even integers is at least $a_\lambda^2+a_\lambda \le 2r$, it follows that the maximum of $2^{a_\lambda}$ is
subexponential in $r$, as is $p(2r)$.
By  \cite[Prop.~2.4]{GLO}, for any orthogonal group of rank $r$, the number of unipotent conjugacy classes is
the sum of $2^{a_\lambda}$ over partitions $\lambda$ of $2r$, where $a_\lambda$ is one less than the number of odd parts in $\lambda$,
with the exception that if $\lambda$ has no odd parts, the summand is either $0$ or $1$, depending on whether $G$ is of the form $\SO^-$ or $\SO^+$.

For $G$ either orthogonal or symplectic and $q$ even, \cite[Prop.~3.1]{GLO} gives a more complicated classification of unipotent conjugacy classes, 
but the number of representations is certainly bounded above by ordered quadruples of partitions summing to $r$, which is the $z^r$ coefficient of
$\prod_{i=1}^\infty (1-z^i)^{-4}$ and therefore subexponential in $r$.

\end{proof}

\begin{prop}
\label{Regularity}
For all $\epsilon > 0$ there exists $N$ with the following property.
For any finite field $\F_q$, any $n>N$, and any semisimple element $s$ in a classical subgroup $G'=\uG'(\F_q)$ of $\GL_n(\F_q)$,
let $\uH$ be the centralizer of $s$ in $\uG'$, 
 $\uH^\circ$  the identity component of $\uH$,  $\uS$ the derived group of $\uH^\circ$, and $r$ the absolute rank of $\uS$.
Then the number of $\uH(\F_q)$-conjugacy classes of unipotent elements in $\uH^\circ(\F_q)$ is less than $q^{\epsilon r}$.
The analogous statement is also true when $\uH$ is the centralizer of a semisimple element $s$ in $G' = \SU_n(\F_q)$.
\end{prop}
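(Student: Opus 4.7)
The plan is to exploit the product decomposition of $\uH^\circ$. By the standard structure theory of centralizers of semisimple elements in classical groups, $\uH^\circ$ decomposes up to central isogeny as a product $\prod_i \uH_i$, where each $\uH_i$ is either a Weil restriction of the form $\Res_{\F_{q^d}/\F_q}\GL_m$ (attached to a Frobenius orbit of eigenvalues of $s$, possibly folded with its $\lambda \mapsto \lambda^{-1}$ or $\lambda \mapsto \bar\lambda^{-1}$ partner in the symplectic/orthogonal/unitary cases) or a smaller classical group attached to an eigenvalue $\lambda = \pm 1$ (in types $B$, $C$, $D$). The absolute rank $r$ of $\uS$ equals the sum over $i$ of the absolute ranks of the derived groups of the $\uH_i$, and since $|\uH/\uH^\circ|$ is bounded by a constant depending only on the type of $\uG'$, it suffices to bound the number of $\uH^\circ(\F_q)$-conjugacy classes of unipotents, which factors as $\prod_i U_i$.

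Each $U_i$ is subexponential in the absolute rank of the derived group of $\uH_i$: for a Weil-restricted general linear factor one has $U_i = p(m)$, independent of $q$ and $d$, while for a classical factor this is the content of Lemma~\ref{Simple Growth}. The main combinatorial task is then to bound $\prod_i U_i$ by a subexponential function of $r$, and I would do so via Lemma~\ref{Products of subexponentials}. Let $a_j$ denote the supremum of $U_i$ over all $\GL$-type or classical factors whose derived group has absolute rank $j$ --- a subexponential sequence --- and let $b_j$ bound the number of rank-$j$ factors that can occur in any such $\uH^\circ$. The key input is that a rank-$j$ $\GL$-factor $\Res_{\F_{q^d}/\F_q}\GL_m$ with $(m-1)d = j$ is indexed by an irreducible polynomial over $\F_q$ of some degree $d \mid j$, of which there are at most $q^d/d$; together with the identity $\sum_i(\text{rank of }\uH_i) = r$, this suffices to verify the hypotheses of Lemma~\ref{Products of subexponentials}.

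Once $\prod_i U_i$ is known to be subexponential in $r$, the conclusion follows from $q^{\epsilon r} \geq 2^{\epsilon r}$: any subexponential function of $r$ is eventually dominated by $2^{\epsilon r}$, so choosing $N$ large enough in terms of $\epsilon$ alone forces $\prod_i U_i < q^{\epsilon r}$ whenever $n > N$. The $\SU_n$ case is entirely analogous, with unitary-twisted Weil restrictions replacing ordinary ones.

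The main obstacle will be the combinatorial step of controlling $b_j$ uniformly in $q$ and $n$ so that Lemma~\ref{Products of subexponentials} applies. The subtlety is that for small $q$ there are very few irreducible polynomials of small degree (for instance only one of degree $1$ when $q=2$), so configurations with many small-rank factors are forced via the sum constraint $\sum_i(\text{rank of }\uH_i) = r$ to have correspondingly larger total $r$. This arithmetic interplay is what rescues the inequality in the small-$q$ regime, where the naive bound $\prod_i U_i \leq 2^{(\text{number of rank-1 factors})}$ would otherwise defeat it.
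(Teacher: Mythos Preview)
Your overall architecture matches the paper's proof: decompose $\uH^\circ$ (equivalently its derived group $\uS$) as a product of Weil-restricted classical factors indexed by Frobenius orbits of eigenvalues, invoke Lemma~\ref{Simple Growth} for a subexponential bound on each factor, and combine via Lemma~\ref{Products of subexponentials}. The gap is in the passage to a bound uniform in $q$. Your multiplicity bounds $b_j$ depend on $q$ --- you yourself write them as $q^d/d$ --- so Lemma~\ref{Products of subexponentials} delivers, for each fixed $q$, a $q$-dependent subexponential function $f_q(r)$ majorizing $\prod_i U_i$. Your next sentence, ``any subexponential function of $r$ is eventually dominated by $2^{\epsilon r}$, so choosing $N$ large enough in terms of $\epsilon$ alone forces $\prod_i U_i < q^{\epsilon r}$,'' is exactly where this breaks: the threshold past which $f_q(r) < 2^{\epsilon r}$ depends on $q$ through the constants $a_1^{b_1}\cdots a_{r_0}^{b_{r_0}}$ in the proof of Lemma~\ref{Products of subexponentials}, and there are infinitely many $q$. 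Your last paragraph correctly identifies this as the obstacle, but the heuristic offered there (few low-degree irreducibles for small $q$) is the mechanism that makes the \emph{fixed-$q$} application of the lemma go through; it does not produce a $q$-uniform $b_j$, and indeed no such uniform bound exists, since the number of rank-$1$ factors can be as large as $q-1$.

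The paper closes this gap with a two-regime split that you are missing. Since the sequence $(c_i)$ from Lemma~\ref{Simple Growth} is subexponential, there is a constant $\alpha$ with $c_i \le \alpha^i$ for all $i$; because each factor's contribution to $r$ is at least its rank as a classical group over the residue field, one has $\prod_i U_i \le \alpha^r$ unconditionally. For $q > \alpha^{1/\epsilon}$ this already gives $\prod_i U_i \le \alpha^r \le q^{\epsilon r}$, with no threshold on $r$ needed at all. Only the finitely many values $q \le \alpha^{1/\epsilon}$ remain, and for each of these one applies Lemma~\ref{Products of subexponentials} with its own $q$-specific $(b_j)$, obtaining finitely many thresholds whose maximum is the required $N$. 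This large-$q$/small-$q$ dichotomy is the missing step in your outline.
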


\begin{proof}
It suffices to prove that the number of conjugacy classes of unipotent elements in $\uH^\circ(\F_q)$ is subexponential.   As $\uH^\circ/\uS$ is diagonal, every unipotent element of $\uH^\circ(\F_q)$
lies in $\uS(\F_q)$.  Thus it suffices to prove a subexponential bound for the unipotent conjugacy classes of $\uS(\F_q)$.

We decompose the natural representation space of $\F_q^n$ of $\GL_n(\F_q)$ by $s$ into $s$-isotypic factors $V_Q  \cong W_Q^{a_Q}$  indexed by  monic irreducible polynomials $Q(x)\in \F_q[x]$ and denote by $b_Q$ the dimension $\dim W_Q = \deg Q$.  If $G'=\SL_n(\F_q)$, then 
$$\uS = \prod_Q \Res_{\F_{q^{b_Q}}/\F_q} \SL_{a_Q,\F_q},$$
where $\Res$ denotes restriction of scalars.  
Each factor is of rank $b_Q(a_Q-1)$ over $\F_q$, so the rank of $\uS$ is $\rho(P)$, where $P$ is the characteristic polynomial of $s$.

For orthogonal groups $G'$,  let $\Pi$ denote the set of orbits for the involution $Q\mapsto Q^*$.  For $\pi\in\Pi$, we denote by $V_\pi$, $W_\pi$, $a_\pi$, and $b_\pi$ 
the sum $\bigoplus_{Q\in\pi} V_Q$, $\bigoplus_{Q\in\pi}W_Q$, $a_Q = a_{Q^*}$, and $\dim W_\pi$ respectively.
As $s$ preserves the inner product $\langle\,,\,\rangle$, we have $V_Q\perp V_R$ unless $Q=R$, so the centralizer of $s$ in $G'$ is 
\begin{equation}
\label{decomp}
\prod_{\pi\in\Pi} \Aut_{\F_{q^{b_\pi}}}(V_\pi,\langle\,,\,\rangle).
\end{equation}
The derived group of the identity component is therefore a product of simple algebraic groups $\uS_\pi$ indexed by $\pi\in \Pi$.
If $\pi = \{x-1\}$ or $\pi= \{x+1\}$,  then $\uS_\pi$ is of absolute rank $\lfloor a_x/2\rfloor$ and of type D or B as $a_x$ is even or odd.  
Otherwise, it is of type D or A,
depending on whether $\pi$ has one element or two
and of absolute rank $b_\pi (a_\pi - 1)$ in either case.
For symplectic groups, we proceed in the same way, with the difference that polynomials $Q=Q^*$ give rise to factors of type C.

For unitary groups, $G'$ acts on an $n$-dimensional vector space over $\F_{q^2}$.  We decompose $\F_{q^2}^n$ into isotypical spaces $V_Q\cong W_Q^{a_Q}$ for the action of $s$, where $Q$ ranges over monic irreducible polynomials in $\F_{q^2}[x]$.
Let $\Pi$ denote the set of orbits of $\{Q\mid a_Q>0\}$ under $Q\mapsto Q^*$.  Let $W_\pi = \bigoplus_{Q\in \pi}W_Q$ and
$V_\pi = \bigoplus_{Q\in \pi}V_Q = W_\pi^{a_\pi}$.
Let $\langle\,,\,\rangle$ denote the sesquilinear blinear form which $G'$ respects.  The different $V_\pi$ are mutually
orthogonal with respect to this form, 
and the centralizer of $s$ in $G'$ is again given by \eqref{decomp}.
The derived group is therefore a product of simple algebraic groups indexed by $Q$, and each is of type A and absolute rank $b_\pi(a_\pi-1)$,
where $b_\pi = \dim W_\pi$.

In every case, therefore, $\uS(\F_q)$ is a product of classical groups of total rank $r\le\rho(P)$.  
The number of unipotent conjugacy classes is therefore $\le \prod_Q c_{b_Q}$, where $(c_i)_{i=1,2,\ldots}$ is the subexponential sequence given by Lemma~\ref{Simple Growth}.
By Lemma~\ref{Products of subexponentials}, for any fixed $q$, the number of conjugacy classes is $O(q^{\epsilon r/2})$
and therefore less than $q^{\epsilon r}$ if $r$ is sufficiently large.  On the other hand, there exists $\alpha$ such that $c_i < \alpha^i$ for all $i$.  If $q > \alpha^{1/\epsilon}$, then
the number of conjugacy classes is less than or equal to $\alpha^r \le q^{\epsilon r}$.

\end{proof}

\section{Unipotent characters}

Let $\uG$ be a connected reductive group over $\F_q$.  Following Lusztig \cite{Lusztig}, we say that an irreducible character of $\uG(\F_q)$ is \emph{unipotent}
if it appears with non-zero multiplicity in the Deligne-Lusztig representation $R_{\uT}^{\uG}(1)$ associated to the trivial character on maximal torus $\uT(\F_q)$.  In particular,
the trivial character is unipotent.  The classication of unipotent characters depends only on the adjoint quotient of $\uG$
(see \cite[Remark]{Lusztig-Disconnected}), therefore only on the root system of $\uG$ together with Frobenius action.

Assuming $\uG$ has connected center, Lusztig gave \cite[p.~x]{Lusztig} a ``Jordan decomposition" of irreducible characters $\chi$ of $\uG(\F_q)$.  We briefly recall the setup, referring the reader to \cite{Lusztig} for details.  Each such character has non-zero multiplicity
in some Deligne-Lusztig character $R_{\uG}^{\uT}(\theta)$, and $\theta$ determines a semisimple element $t$ of the dual group $\uG^*(\F_q)$, where $\uG^*$
is the connected reductive algebraic group over $\F_q$ whose root datum is dual to that of $\uG$, with corresponding Frobenius action.  The element $t$ is well-defined by
up to 
conjugacy class by $\chi$.  As $\uG$ has connected center, the derived group of $\uG^*$ is simply connected, so choosing a representative $t$, the centralizer $\uH^*$ of $t$ in $\uG^*$
is a connected reductive group.  If $\uH$ denotes the dual group of $\uH^*$, there is a bijective correspondence $\pi\mapsto \chi_\pi$ between the set of unipotent characters $\pi$ of $\uH(\F_q)$
and the set $\E(t)$ of irreducible characters $\chi_\pi$ of $\uG(\F_q)$ associated to the class of $t$.
For us, the most important point is that 
\begin{equation}
\label{Induction degree}
\chi_\pi(1) = \frac{|\uG(\F_q)|'}{|\uH(\F_q)|'}\pi(1),
\end{equation}
where $m'$ denotes the largest divisor of $m$ prime to $q$.

We record the following consequence.

\begin{lem}
\label{ord-l of degree}
If $\chi_{\ad}$ is a character of $\uG_{\ad}(\F_q)$ associated to the class of a semisimple element
$t\in (\uG_{\ad})^*(\F_q)$, and if the order of the centralizer of $t$ is not divisible by a prime $\ell\nmid q$, then
$$\ord_\ell \chi_{\ad}(1) = \ord_\ell |\uG_{\ad}(\F_q)|.$$

\end{lem}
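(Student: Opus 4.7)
The plan is to apply the Lusztig Jordan decomposition formula \eqref{Induction degree} and then track the $\ell$-adic valuations through it. Since $\uG_{\ad}$ has trivial (hence connected) center, the Jordan decomposition applies, so $\chi_{\ad}=\chi_\pi$ for some unipotent character $\pi$ of $\uH(\F_q)$, where $\uH$ is the dual of the centralizer $\uH^*$ of $t$ in $(\uG_{\ad})^*$. Note that because $\uG_{\ad}$ has connected center, the derived group of $(\uG_{\ad})^*$ is simply connected, so by Steinberg's theorem $\uH^*$ is indeed connected reductive, and hence dualizing to get $\uH$ makes sense.

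The identity \eqref{Induction degree} then reads
$$\chi_{\ad}(1) \;=\; \frac{|\uG_{\ad}(\F_q)|'}{|\uH(\F_q)|'}\,\pi(1).$$
Taking $\ord_\ell$ of both sides, the main ingredients I need are: (i) $\ord_\ell m' = \ord_\ell m$ whenever $\ell\nmid q$, since removing the $q$-part affects only the $p$-part for $p\mid q$; (ii) the standard fact that dual reductive groups over $\F_q$ have the same finite-group order, so $|\uH(\F_q)|=|\uH^*(\F_q)|$; and (iii) that character degrees divide the group order, so $\pi(1)\mid |\uH(\F_q)|$.

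Combining these, the hypothesis $\ell\nmid |\uH^*(\F_q)|$ translates to $\ord_\ell |\uH(\F_q)|=0$, which immediately gives $\ord_\ell |\uH(\F_q)|'=0$ and $\ord_\ell \pi(1)=0$. The formula then yields
$$\ord_\ell \chi_{\ad}(1) \;=\; \ord_\ell |\uG_{\ad}(\F_q)|' \;=\; \ord_\ell |\uG_{\ad}(\F_q)|,$$
as desired.

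The argument is essentially a bookkeeping exercise once the Jordan decomposition formula is invoked; the only non-obvious input is the equality $|\uH(\F_q)|=|\uH^*(\F_q)|$ of orders of dual groups, which is the step I would want to cite carefully (it is standard, e.g.\ via the generic-order polynomial being invariant under duality of root data with compatible Frobenius action). No deep obstacle arises.
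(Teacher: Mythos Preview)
Your proof is correct and follows essentially the same route as the paper: invoke the Jordan decomposition formula \eqref{Induction degree}, use $\ell\nmid q$ to pass between $m$ and $m'$, and use $|\uH(\F_q)|=|\uH^*(\F_q)|$ to translate the hypothesis on the centralizer. The only cosmetic difference is that you conclude $\ord_\ell\pi(1)=0$ directly from $\pi(1)\mid|\uH(\F_q)|$, whereas the paper instead obtains $\ord_\ell\chi_{\ad}(1)\ge\ord_\ell|\uG_{\ad}(\F_q)|$ from $\pi(1)\ge 1$ and then gets the reverse inequality from $\chi_{\ad}(1)\mid|\uG_{\ad}(\F_q)|$; both are equally valid.
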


\begin{proof}
As $\ell\nmid q$, we have $\ord_\ell |\uG_{\ad}(\F_q)| = \ord_\ell |\uG_{\ad}(\F_q)|'$.
Defining $\pi$ so that $\chi_{\ad} = \chi_\pi$, by
 \eqref{Induction degree},
$$\ord_\ell \chi_{\ad}(1) = \ord_\ell |\uG_{\ad}(\F_q)| \pi(1)
 \ge \ord_\ell |\uG_{\ad}(\F_q)|.$$
The opposite inequality follows from the fact that $\chi_{\ad}(1)$ divides $|\uG_{\ad}(\F_q)|$.
\end{proof}

\begin{prop}
\label{Unipotent characters}
For all $\epsilon > 0$ there exists $N$ with the following property.
For any finite field $\F_q$, any $r>N$, any adjoint simple group $\uG$ over $\F_q$
of type A, B, C, or D, and any semisimple element $t\in \uG^*(\F_q)$,
such that the centralizer of $t$ in $\uG^*$  has absolute semisimple rank $r$,
the number of elements in $\E(t)$ is less than $q^{\epsilon r}$
\end{prop}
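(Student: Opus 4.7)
The plan is to mirror the proof of Proposition~\ref{Regularity} almost verbatim, replacing ``unipotent conjugacy classes'' by ``unipotent characters''. By the Jordan decomposition recalled above, $|\E(t)|$ equals the number of unipotent characters of $\uH(\F_q)$, where $\uH$ is dual to the centralizer $\uH^*:=C_{\uG^*}(t)$. Since $\uG$ is adjoint, $\uG^*$ is simply connected, so Steinberg's theorem forces $\uH^*$ to be connected reductive of absolute semisimple rank $r$. A crucial input, due to Lusztig, is that the number of unipotent characters of a connected reductive $\F_q$-group depends only on the root datum together with Frobenius action, and not on $q$ itself; in particular, restriction of scalars does not alter this count.

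First I would decompose $\uH^*$ exactly as in the proof of Proposition~\ref{Regularity}. The $t$-isotypic decomposition of the natural representation writes the derived group of $\uH^*$ as a product $\prod_\pi\uS_\pi^*$ of classical factors indexed by orbits $\pi$ of monic irreducibles under $Q\mapsto Q^*$, each of type $A$, ${}^2A$, $B$, $C$, $D$, or ${}^2D$ over an appropriate extension of $\F_q$ (via $\Res_{\F_{q^{b_\pi}}/\F_q}$), with absolute ranks summing to $r$. Dualization preserves this product decomposition and the individual absolute ranks, and since unipotent characters of $\uH(\F_q)$ factor as tensor products across the simple factors of the adjoint quotient of $\uH$, it suffices to bound, uniformly in $q$, the number of unipotent characters of each simple classical factor.

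Next I would invoke Lusztig's explicit classification: in types $A_{k-1}$ and ${}^2A_{k-1}$ the unipotent characters are indexed by partitions of $k$, giving $p(k)$; in types $B_k$, $C_k$, $D_k$, and ${}^2D_k$ they are indexed by Lusztig symbols of appropriate rank and defect, with count bounded polynomially in $p(k)$. Letting $u_k$ denote the supremum over classical types of this count for rank $k$, the sequence $(u_k)$ is subexponential and independent of $q$, and in particular $u_k\le\alpha^k$ for some absolute constant $\alpha$. Because restriction of scalars preserves the count, each $\pi$-factor contributes at most $u_{s_\pi}$ unipotent characters, where $s_\pi$ is the rank of the base classical group and $s_\pi\le r_\pi$; summing, $\sum_\pi s_\pi\le r$. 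Hence one obtains the uniform estimate
$$|\E(t)|\le\prod_\pi u_{s_\pi}\le\alpha^{\sum_\pi s_\pi}\le\alpha^r,$$
exactly parallel to the bound derived at the corresponding step of Proposition~\ref{Regularity}.

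The endgame is then identical to the end of that proof. If $q>\alpha^{1/\epsilon}$, then $\alpha^r\le q^{\epsilon r}$ immediately. For each of the finitely many smaller $q$, one applies Lemma~\ref{Products of subexponentials} (with multiplicity bound coming from the finite number of monic irreducible polynomials of each degree over $\F_q$) to get a subexponential estimate that beats $q^{\epsilon r}$ once $r$ exceeds some $N(q)$, and taking $N$ to be the maximum of these finitely many thresholds completes the proof. The main obstacle I anticipate is extracting from Lusztig's symbol parametrization a clean subexponential bound on $u_k$ uniformly across all classical types---including the twisted forms ${}^2A$ and ${}^2D$, and the distinction between $D$-type symbols of even versus odd defect; after that, everything is parallel to Proposition~\ref{Regularity}.
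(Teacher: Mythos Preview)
Your proposal is correct and follows essentially the same approach as the paper: both reduce to Proposition~\ref{Regularity} with the subexponential bound on unipotent conjugacy classes replaced by a subexponential bound on the number of unipotent characters of a classical group of given rank (using $p(k)$ in types $A$/${}^2A$ and the Lusztig symbol count in the remaining types), and then run the identical large-$q$/small-$q$ endgame. Your write-up in fact spells out a few points the paper leaves implicit, notably that the unipotent-character count is insensitive to restriction of scalars and factors across the simple components.
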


\begin{proof}
The proof is essentially the same as that of Proposition~\ref{Regularity}.  The only difference
is that instead of Lemma~\ref{Simple Growth}, we use a subexponential estimate for the number
of unipotent characters of a classical simple group of rank $r$.
The number of unipotent characters is independent of $q$.
For special linear and unitary groups, it is given by the partition function $p(r)$
\cite[p.~358]{Lusztig}.  For orthogonal and symplectic groups, there are at most two different unipotent characters associated to a Lusztig symbol of rank $r$ \cite[p.~359]{Lusztig}.  The number of such symbols grows subexponentially by \cite[Prop.~3.4]{Lusztig-Classical} and Proposition~\ref{subexp}.
\end{proof}

The irreducible characters of the finite simple group $G$ can be regarded as the $Z$-trivial characters of $\tilde G$, where $Z$ is the center of $\tilde G = \uG_{\sc}(\F_q)$.
By \cite[Prop.~5.1]{Lusztig-Disconnected}, $\Irr(\tilde G)$ can be decomposed into classes $\E(s)$ indexed by semisimple conjugacy classes in $(\uG_{\sc})^*(\F_q)$.
Moreover, the conjugation action of $\uG_{\ad}(\F_q)/G$ on $\Irr(G)$ preserves this decomposition, and the orbits corresponding to elements of $t\in (\uG_{\sc})^*(\F_q)$ with connected centralizer
are singletons.  For such $s$, therefore, each character of $G$ extends to $|Z|$ different characters of $\uG_{\ad}(\F_q)$, obtained from one another by tensor product by $1$-dimensional characters
of $\uG_{\ad}(\F_q)/G$ (which are necessarily trivial on $G$).  Thus the correspondence between $\Irr(\uG_{\ad}(\F_q))$ and $\Irr(G)$ is given by a function (namely, restriction) on the complement of
the set of characters of $\Irr(G)$ corresponding to $t$ with disconnected centralizer.  If $\tilde t\in (\uG_{\ad})^*(\F_q)$ is a lift of $t$ to an element on the universal cover, then $t$ fails to have
connected centralizer only if the multiple of $\tilde t$ by some non-trivial central element is conjugate to $s$ and therefore only if the characteristic polynomial of $\tilde t$ is 
a polynomial $P(x)$ satisfying $P(\zeta x)\equiv P(x)$ for some $\zeta\neq 1$.

\section{Zsigmondy primes}

We recall that given $q$ and $m$, a \emph{Zsigmondy prime for the pair $(q,m)$} is a prime $\ell$ such that $q$ has order exactly $m$ in $\F_\ell^\times$.
Zsigmondy's theorem asserts that such a prime always exists if $m>6$.

\begin{lem}
\label{Which Z}
If $\ell$ is a Zsigmondy prime for $(q,m)$, then $\ell$ divides $q^k-1$ if and only if $m$ divides $k$, and $\ell$ divides $q^k+1$ if and only if  $k$ is an odd
multiple of $m/2$.
\end{lem}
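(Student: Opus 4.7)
The plan is to work inside the cyclic group $\F_\ell^\times$ of order $\ell-1$ and exploit the hypothesis that $q$, reduced mod $\ell$, has multiplicative order exactly $m$. The first claim is essentially the definition of order: $\ell\mid q^k-1$ is the same as $q^k\equiv 1\pmod\ell$, which holds if and only if $\ord_\ell(q)=m$ divides $k$.

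For the second claim I would first dispose of the case $\ell=2$ (which forces $q$ odd and $m=1$, where $q^k-1$ and $q^k+1$ agree mod $2$ and the statement is only meaningful under the mild extra convention that Zsigmondy primes are odd). With $\ell$ odd, $\ell\mid q^k+1$ amounts to $q^k\equiv -1\pmod\ell$. Squaring gives $q^{2k}\equiv 1$, so by part~(1) we have $m\mid 2k$; and $q^k\not\equiv 1$, so $m\nmid k$.

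The key step is to convert "$m\mid 2k$ and $m\nmid k$" into the precise statement that $k$ is an odd multiple of $m/2$. Writing $2k=ms$ and asking whether $s$ is even shows that $m$ must be even, say $m=2m'$; then $k=m's$ and $m\nmid k$ forces $s$ to be odd, which is exactly the desired conclusion. Conversely, if $k=(m/2)s$ with $s$ odd, then $q^k=(q^{m/2})^s$; since $q^{m/2}$ is a square root of $1$ in the field $\F_\ell$ and is not $1$ (because $m$ is the exact order of $q$), it must equal $-1$, and raising to the odd power $s$ gives $q^k\equiv -1\pmod\ell$.

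There is no real obstacle here; the only thing to be careful about is the parity bookkeeping, which in particular reveals that when $m$ is odd no Zsigmondy prime $\ell$ for $(q,m)$ can ever divide $q^k+1$, so the second half of the lemma is vacuously true in that regime. The entire argument is elementary cyclic-group arithmetic and does not require any of the deeper machinery developed earlier in the paper.
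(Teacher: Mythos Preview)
Your proof is correct and follows essentially the same approach as the paper: both work in $\F_\ell^\times$, use that $\ell\mid q^k-1$ iff $m\mid k$ directly from the definition of order, and then reduce $\ell\mid q^k+1$ to the condition ``$m\mid 2k$ but $m\nmid k$'' before unwinding the parity to reach ``$k$ is an odd multiple of $m/2$.'' You are simply more explicit than the paper about the edge case $\ell=2$ and about why $m$ must be even in the second claim, but the underlying argument is identical.
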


\begin{proof}
The condition that $\ell$ divides $q^k-1$ is equivalent to the condition that the $k$th power of $q$ in $\F_\ell^\times$ is $1$, i.e., that $m$ divides $k$.
The condition that $\ell$ divides $q^k+1$ is equivalent to the condition that $m$ divides $2k$ but not $k$, i.e., $dm=2k$ for some odd integer $d$.  Equivalently
$k$ is an odd multiple of $m/2$.
\end{proof}

\begin{lem}
\label{Just one}
If a semisimple element $s\in \SL_n(\F_q)$ has characteristic polynomial $P$, $m>2\rho(P)$, and no irreducible factor of $P$ has degree a multiple of $m$, then any Zsigmondy prime $\ell$ for $(q,m)$
is relatively prime to the order of the centralizer of $s$ in $\SL_n(\F_q)$.  Likewise, if $m>4\rho(P)$, $s\in \SU_n(\F_q)$, and no irreducible $\F_{q^2}[x]$ factor of $P$ has degree an integer multiple of $m/2$,
then any Zsigmondy prime $\ell$ for $(q,m)$ is relatively prime to the order of the centralizer of $s$ in $\SU_n(\F_q)$.

\end{lem}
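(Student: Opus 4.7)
The plan is to combine the explicit description of the centralizer of a semisimple element of a classical group (essentially the one used in the proof of Proposition~\ref{Regularity}) with Lemma~\ref{Which Z}. Write $P=\prod_{i=1}^j P_i^{a_i}$ with the $P_i$ pairwise distinct monic irreducibles of degree $b_i$ (over $\F_q$ in the linear case, over $\F_{q^2}$ in the unitary case), so $\rho(P)=\sum_i b_i(a_i-1)$.

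For the linear case, the centralizer of $s$ in $\GL_n(\F_q)$ is $\prod_i\GL_{a_i}(\F_{q^{b_i}})$, whose order is a power of $q$ times a product of factors of the form $q^{jb_i}-1$ with $1\le j\le a_i$. Since $C_{\SL_n(\F_q)}(s)=C_{\GL_n(\F_q)}(s)\cap\SL_n(\F_q)$ is a subgroup of $C_{\GL_n(\F_q)}(s)$, any Zsigmondy prime $\ell$ dividing the smaller centralizer must divide some $q^{jb_i}-1$, which by Lemma~\ref{Which Z} forces $m\mid jb_i$. In the unitary case, the sesquilinear form pairs $V_{P_i}$ with $V_{P_i^*}$, so the centralizer of $s$ in $\SU_n(\F_q)$ decomposes across orbits of $P_i\mapsto P_i^*$: each two-element orbit $\{P_i,P_i^*\}$ contributes a factor of the form $\GL_{a_i}(\F_{q^{2b_i}})$, while each fixed polynomial contributes a unitary factor whose order involves $q^{jb_i}-(-1)^j$. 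Applying Lemma~\ref{Which Z} to each type shows that $\ell\mid q^{2jb_i}-1$ requires $m\mid 2jb_i$, while $\ell\mid q^{jb_i}\pm1$ likewise forces $m\mid 2jb_i$. The hypothesis that no $b_i$ is an integer multiple of $m/2$ is exactly $m\nmid 2b_i$.

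Armed with the divisibility ``$m\mid jb_i$'' (linear) or ``$m\mid 2jb_i$'' (unitary) for some $1\le j\le a_i$, a short three-case analysis on $j$ closes the argument. If $j=1$, the divisibility reduces to $m\mid b_i$ (resp.\ $m\mid 2b_i$), which is excluded by hypothesis. If $j=a_i\ge 2$, then $\rho(P)\ge b_i(a_i-1)\ge b_i$, and combining $a_ib_i\le\rho(P)+b_i$ with $jb_i\ge m>2\rho(P)$ (resp.\ $2jb_i\ge m>4\rho(P)$) yields $b_i>\rho(P)$, a contradiction. If $1<j<a_i$, then $jb_i\le(a_i-1)b_i\le\rho(P)$, directly incompatible with $jb_i>2\rho(P)$ (resp.\ $2jb_i>4\rho(P)$).

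The main obstacle is really the bookkeeping in the unitary case: one must verify that whichever of the three shapes $q^k-1$, $q^k+1$, or $q^{2k}-1$ appears among the elementary divisors of the centralizer order, divisibility by $\ell$ reduces uniformly to the single condition $m\mid 2jb_i$. Once this is in place, the extra factor of two in the hypothesis ``$m>4\rho(P)$'' (as opposed to $m>2\rho(P)$ in the linear case) is precisely what is needed to absorb the doubling in the unitary divisibility condition, so the same numerical case analysis goes through without further change.
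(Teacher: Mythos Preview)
Your proof is correct and follows essentially the same approach as the paper: compute the centralizer in the ambient $\GL_n$ (resp.\ $U_n$) as a product of $\GL_{a_i}(\F_{q^{b_i}})$ (resp.\ $\GL_{a_i}(\F_{q^{2b_i}})$ or $U_{a_i}(\F_{q^{b_i}})$) and use Lemma~\ref{Which Z} to reduce to a divisibility condition on the exponents $jb_i$. The only cosmetic difference is that the paper splits on $a_i$ rather than on $j$: when $a_i\ge 2$ one has $a_ib_i\le 2b_i(a_i-1)\le 2\rho(P)<m$ (resp.\ $2a_ib_i\le 4\rho(P)<m$) in one line, which handles all $j\ge 1$ at once and makes your separate treatment of $j=a_i$ and $1<j<a_i$ unnecessary.
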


\begin{proof}
For $\SL_n(\F_q)$, it suffices to prove that $\ell$ does not divide the order of the centralizer of $s$ in $\GL_n(\F_q)$.
Factoring
$$P = \prod_{i=1}^j Q_i^{a_i},$$
with $\deg Q_i = b_i$,
the centralizer of $s$ can be written
$$\prod_{i=1}^j \GL_{a_i}(\F_{q^{b_i}}).$$
For each $i$, $b_i(a_i-1) \le \rho(P)$, so if $a_i\ge 2$, we have $a_i b_i \le 2\rho(P) < m$, so
$$|\GL_{a_i}(\F_{q^{b_i}})| = \prod_{k=0}^{a_i-1} q^{b_ik} (q^{b_i(a_i-k)}-1)$$
is prime to $\ell$.  If $a_i=1$, then $\GL_1(\F_{q^{b_i}})$ has order $q^{b_i}-1$ which is again prime to $\ell$.

For $\SU_n(\F_q)$, we proceed as before, computing the centralizer of $s$ in $U_n(\F_q)$ as  in Proposition~\ref{Regularity}.
In this case, the centralizer factors are of the form $U_{a_i}(\F_{q^{b_i}})$ or $\GL_{a_i}(\F_q^{2b_i})$ depending on whether $Q_i=Q_i^*$.  As $a_i\ge 2$ implies $2a_ib_i \le 4\rho(P)<m$, it follows that no Zsigmondy prime for $(q,m)$
can divide the order of a factor of either kind, so we may assume $a_i=1$.
As $b_i$ is not a multiple of $m/2$, $\ell$ divides neither the order of $\GL_1(\F_q^{2b_i})$ nor $U_1(\F_q^{b_i})$.

\end{proof}

We remark that if $\ell$ divides $n$,  then $m$ divides $\ell-1$ for a prime divisor $\ell$ of $n$.

\section{Weak regularity conditions}

Let $k\ge 1$ and $m\ge 0$ be integers.  We say a polynomial $P(x)\in \bar\F_q[x]$ is 
$m$-\emph{regular} if the following two conditions hold:
\begin{enumerate}
\item $\rho(P)\le m$.
\item $P(x)$ is not identical to $P(\zeta x)$ for any $\zeta\neq 1$.
\end{enumerate}

If the characteristic polynomial of an element in $\GL_n(\bar\F_q)$ is $m$-regular, 
we say that this element is $m$-regular.  This depends only on the semisimple part $s$ in the Jordan 
decomposition the element.  Likewise, we say an irreducible character of $\uG_{\ad}(\F_q)$ is $m$-regular if and only if 
it belongs to $\E(s)$, where the characteristic polynomial of the image of $s$ under the natural representation of $(\uG_{\ad})^*(\F_q)$
is $m$-regular.  

Let $G$ be a classical finite simple group.
We define $G'$, $\A$, $\At$, $\B$, $\Bt$, $\P$ as in \S5.  Given a fixed choice of $m$,
we define $\P^\circ$ to be the subset of $m$-regular polynomials in $\P$, $\At^\circ := f^{-1}(\P^\circ)$, $\A^\circ := \phi(\At^\circ)$, $\Bt^\circ := g^{-1}(\P^\circ)$, $\B^\circ := \psi(\Bt^\circ)$.
Note that $\phi^{-1}(\A^\circ) = \At^\circ$ since $\rho(P(x)) = \rho(\omega^{-\deg P}P(\omega x))$ for all scalars 
$\omega\neq 0$.  Likewise, $\psi^{-1}(\B^\circ) = \Bt^\circ$, since if $(\chi,\chi_{\ad})$ and
$(\chi,\chi'_{\ad})$ both lie in $\Bt$, and $s\in \tilde G$ lies in the semisimple class associated to $\chi_{\ad}$, then there exists $z\in Z$ such that $zs$ lies in the semisimple class associated to $\chi'_{\ad}$.

By definition, the image of any element of $\At^\circ$ in $G'$ is $m$-regular.  Part (2) of the definition of $m$-regularity guarantees that the fibers of $\phi$ over $\A^\circ$ and of $\psi$ over $\B^\circ$ all have exactly $|Z|$ elements, where $Z$ is the center of $\tilde G$.
Since $\tilde G\to G$ is $|Z|$ to $1$ and $G$ is of index $|Z|$ in $\uG_{\ad}(\F_q)$, all  fibers of $\phi$ and $\psi$ have cardinality $\le |Z|$.  This gives conditions (3) and (4) of Proposition~\ref{Master}.

If $s\in G'$ is semisimple and $m$-regular, its centralizer in $G'$ is the group of
$\F_q$-points of a reductive algebraic group $\uG$ over $\F_q$.
We have seen that $\uG$ has at most $2$ components, so if $q$ is odd, every unipotent element $u\in G'$ which commutes with $s$ lies in $\uG^\circ(\F_q)$.
If $q$ is even, we can regard $G'$ as the group of $\F_q$-points of a simply connected semisimple group, so the centralizer of $s$ is connected, and again $u\in \uG^\circ(\F_q)$.
To bound the number of $G'$-conjugacy classes of elements in $G'$ with semisimple part conjugate to $s$,
it suffices to bound the  $\uG^\circ(\F_q)$-conjugacy classes of unipotent elements in $\uG^\circ(\F_q)$.
By Proposition~\ref{Regularity}, we have a subexponential bound for this quantity.  As the homomorphism $\tilde G\to G'$ is at most $2$ to $1$, we have a subexponential bound in $m$  for 
the number of elements of $\At^\circ$ mapping to any element of $\P^\circ$, the set of $m$-regular polynomials in $\P$.
Likewise, by Proposition~\ref{Unipotent characters}, we have a subexponential bound for the number of elements of $\Bt^\circ$ mapping to any element of $\P^\circ$.

By Lemma~\ref{Exponential decay}, the fraction of elements $P$ of $\P$ with $\rho(P)\ge m$ is less than or equal to $4 \cdot 2^{-m/4}$.  There exists a subexponential sequence 
$\sigma_1,\sigma_2,\ldots$
such that 
$$|f^{-1}(P)|,\,|g^{-1}(P)|\le \sigma_m$$
if $\rho(P)\le m$,
so there exists $N$ for which
conditions (5) and (6) of Proposition~\ref{Master} hold.  Each element in $\At\setminus \At^\circ$ either has $\rho$-invariant greater than $m$ or has invariant $\le m$ but satisfies 
$P(x)\equiv P(\zeta x)$ for some $\zeta \neq 1$.  If $m$ is sufficiently large in absolute terms, we may assume that the contribution of all elements of with $\rho$-invariant greater than $m$ to
either $\At$ or $\Bt$ represents less than a $\delta/2$ fraction of the total elements of $\At$ or $\Bt$ respectively.  Once $m$ is fixed, we have an upper bound for the size of fibers of $f$ or $g$, so if $q^n$ is sufficiently large, Lemma~\ref{A orbits} implies that the contribution of all fibers of all elements of $P$ with $P(x)\equiv P(\zeta x)$, as $\zeta$ ranges over all elements other than $1$,
is again less than a $\delta/2$ fraction of the elements of $\At$ or $\Bt$.  To summarize, we have proven the following.

\begin{prop}
\label{1 to 6}
If $G$ is sufficiently large, for all $\delta >0$ if $m$ is chosen to be sufficiently large, conditions (1)--(6) of Proposition~\ref{Master} hold for $\P^\circ$ defined by $m$-regularity.
\end{prop}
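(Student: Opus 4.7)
The plan is to verify conditions (1)--(6) of Proposition~\ref{Master} one by one, drawing on the structural results of \S4, \S6, and \S7.

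Conditions (3) and (4) are immediate from the definition of $m$-regularity: clause (2) excludes any $\zeta \neq 1$ with $P(\zeta x) \equiv P(x)$, so for $a \in \A^\circ$ the $|Z|$ central translates of an $m$-regular semisimple class are pairwise non-conjugate in $\tilde G$, forcing $|\phi^{-1}(a)| = |Z|$, the maximum possible value; the same mechanism (made explicit in the discussion at the end of \S7 concerning when central translates produce coincidences under the Lusztig correspondence) shows that the fibers of $\psi$ over $\B^\circ$ also attain the maximum $|Z|$.

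For (5) and (6), the key input is a uniform bound $|f^{-1}(P)|,\, |g^{-1}(P)| \le \sigma_{\rho(P)}$ by a single subexponential sequence $\sigma_m$. On the conjugacy-class side, an element of $\At$ lying over $P$ corresponds, up to the $\le 2$-to-$1$ map $\tilde G \to G'$ and the $O(1)$-many semisimple classes in $G'$ sharing characteristic polynomial $P$, to a unipotent conjugacy class in the centralizer of the semisimple part; Proposition~\ref{Regularity} then yields a subexponential-in-$\rho(P)$ bound. The character side is handled identically using Proposition~\ref{Unipotent characters}. Writing $m_*(n) := \min\{m : \sigma_m \ge n\}$, subexponentiality forces $m_*(n)$ to outgrow every multiple of $\log n$, so Lemma~\ref{Exponential decay} gives $|\{P : |f^{-1}(P)| \ge n\}| \le 4 \cdot 2^{-m_*(n)/4}|\P| < n^{-2}|\P|$ for every $n$ exceeding an absolute constant $N$.

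For (1) and (2) I partition $\At \setminus \At^\circ$ into classes lying over $P$ with $\rho(P) > m$ and classes lying over $P$ satisfying $P(\zeta x) \equiv P(x)$ for some $\zeta \neq 1$. The first part contributes at most $\sum_{\ell > m}\sigma_\ell\cdot 4 \cdot 2^{-\ell/4}|\P|$, a tail of a convergent series depending only on $m$, which falls below $(\delta/2)|\At|$ once $m$ is large in terms of $\delta$ alone. The second part is controlled by Lemma~\ref{A orbits}: at most $2q^{n/2-1}$ polynomials are scaling-invariant, each with fiber size $\le \sigma_m$, contributing $O(\sigma_m \, q^{-n/2}|\P|)$, which is below $(\delta/2)|\At|$ once $q^r$ is large with $m$ fixed. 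The analogous estimate on the $(g,\Bt)$ side delivers condition (2). The main technical obstacle, I expect, is the calibration in (5) and (6) between the subexponential fiber bound and the required $n^{-2}$ decay: this works only because Lemma~\ref{Exponential decay} supplies genuine geometric decay in $m$, which dominates any subexponential envelope after translation into a decay rate in $n$. The separate "sufficiently large $G$" hypothesis enters in the second half of (1) and (2), where Lemma~\ref{A orbits} becomes useful only once $q^r$ dwarfs the already-fixed $\sigma_m$.
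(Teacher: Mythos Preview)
Your proposal is correct and follows essentially the same approach as the paper: the paper's proof of Proposition~\ref{1 to 6} is the running discussion in \S9 that precedes the proposition statement (``To summarize, we have proven the following''), and you recapitulate that discussion faithfully, invoking the same ingredients (the $|Z|$-fiber argument for (3)--(4), Propositions~\ref{Regularity} and~\ref{Unipotent characters} with Lemma~\ref{Exponential decay} for (5)--(6), and the two-part split via Lemma~\ref{A orbits} for (1)--(2)). Your treatment is slightly more explicit in places---introducing $m_*(n)$ to make the passage from subexponential fiber bounds to the $n^{-2}$ decay transparent, and writing out the convergent tail $\sum_{\ell>m}\sigma_\ell\cdot 4\cdot 2^{-\ell/4}$---but these are elaborations of the paper's sketch rather than a different route.
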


\section{End of the proof}

\begin{prop}
\label{Few large centralizers}
For all $\epsilon > 0$, there exists $C$ such that if $G$ is a classical finite simple group of rank $r$, the fraction of elements $P\in \P$ such that some element of $f^{-1}(P)$
has centralizer order greater than $C q^r$ in $\tilde G$ is less than $\epsilon$.
\end{prop}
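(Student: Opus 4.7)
The plan is to reduce to semisimple elements and combine an explicit dimension-based bound on the centralizer with the two rarity results of Section~4.

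First, for any $g\in f^{-1}(P)$ with Jordan decomposition $g=su$, the centralizer of $g$ in $\tilde G$ is contained in that of its semisimple part $s$, so it suffices to bound the centralizer of semisimple representatives with characteristic polynomial $P$. By the proposition at the end of \S5, each $P$ supports at most four semisimple $G'$-classes, and the centralizer in $\tilde G$ is at most a bounded multiple of the centralizer in $G'$ (trivially in type A, where $\tilde G = G'$; and via the bounded kernel of $\tilde G\to G'$ in types B, C, D). So it is enough to bound $|Z_{G'}(s)|$.

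Second, writing $P=\prod_i Q_i^{a_i}$ with $\deg Q_i = b_i$, the decomposition worked out in the proof of Proposition~\ref{Regularity} expresses the centralizer of $s$ in $G'$ as a product of general linear, unitary, orthogonal, and symplectic groups indexed by orbits of $Q\mapsto Q^*$, with derived part of absolute semisimple rank at most $\rho(P)$. Using $|\GL_a(\F_Q)|\le Q^{a^2}$ and its analogs and the identity $\sum b_i a_i = \deg P \le 2r$, one arrives at a bound of the form
\[
|Z_{G'}(s)|\le C_0\, q^{r+e(P)}\prod_i(1+q^{-b_i})
\]
for an absolute constant $C_0$, where $e(P) := \sum b_i a_i(a_i-1)$ records the excess dimension contributed by repeated irreducible factors, and $\prod_i(1+q^{-b_i}) = (1-q^{-1})\alpha(P)$ captures anisotropic contributions coming from self-paired or unitary factors in the non-linear types.

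The main obstacle is to absorb both auxiliary factors into an absolute constant for most $P$. Given $\epsilon>0$, Lemma~\ref{Undersized classes are rare} supplies $M$ with $\alpha(P)\le (1+q^{-1})^M\le (3/2)^M$ outside an $\epsilon/3$-fraction of $\P$, handling $\prod(1+q^{-b_i})$ uniformly in $q\ge 2$. For the factor $q^{e(P)}$ I split into two regimes: if $q\ge q_0$ for a threshold $q_0 = q_0(\epsilon)$ with $4 q_0^{-1/4}\le \epsilon/3$, Lemma~\ref{Exponential decay} forces $\rho(P)=0$ (and hence $e(P)=0$) outside an $\epsilon/3$-fraction of $\P$; if $q<q_0$, choose $m$ with $4\cdot 2^{-m/4}<\epsilon/3$, so Lemma~\ref{Exponential decay} excludes only an $\epsilon/3$-fraction with $\rho(P)>m$, and on the complement $b_i(a_i-1)\le\rho(P)\le m$ forces $a_i\le m+1$, giving $e(P)\le m(m+1)$ and $q^{e(P)}\le q_0^{m(m+1)}$, a constant depending only on $\epsilon$. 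Taking $C$ as the maximum across both regimes completes the argument.
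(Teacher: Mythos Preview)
Your proposal is correct and follows essentially the same route as the paper's proof: both reduce to semisimple centralizers, split into a large-$q$ regime (where Lemma~\ref{Exponential decay} forces $\rho(P)=0$ and the centralizer is a torus of order $\le \alpha(P)q^r$) and a bounded-$q$ regime (where $\rho(P)$ is bounded and the non-torus part of the centralizer has bounded order), and invoke Lemma~\ref{Undersized classes are rare} to control the $\alpha(P)$ factor. The only cosmetic difference is that you encode the bounded-$q$ step via the explicit exponent $e(P)=\sum b_i a_i(a_i-1)\le m(m+1)$, whereas the paper writes $P=QR$ with $Q$ square-free and $\deg R$ bounded; these are equivalent bookkeeping for the same estimate.
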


\begin{proof}
By Lemma~\ref{Exponential decay}, if $q > 8/\epsilon$, the fraction of elements $P\in\P$ for which $\rho(P)>0$ is less than $\epsilon/2$.  When $\rho(P)=0$, 
the centralizer of every element of $f^{-1}(P)$ is the group of $\F_q$-points of a maximal torus.  We claim that this group has order $\le \alpha(P)q^r$.
For semisimple $s\in \SL_{r+1}(\F_q)$ with characteristic polynomial $Q_1\cdots Q_j$, $Q_i$ irreducible, the order of the centralizer of $s$ is
$$\frac{\prod_{i=1}^j (q^{\deg Q_i}-1)}{q-1} \le (1-q^{-1})^{-1} q^r \le \alpha(P)q^r.$$
For semisimple $s\in \SU_{r+1}(\F_q)$ with characteristic polynomial $Q_1\cdots Q_j$, the order of the centralizer is
$$\frac{\prod_{i=1}^j (q^{\deg Q_i}-(-1)^{\deg Q_i})}{q+1} \le \alpha(P)q^r.$$
For $\SO_{2r+1}(\F_q)$, $\Sp_{2r}(\F_q)$ or $\SO^{\pm}_{2r}(\F_q)$ every irreducible $Q_i=Q_i^*$ of degree $\ge 2$ contributes a factor of $q^{\deg Q_i/2}-1$, while every pair $\{Q_i,Q_j\}$ with $Q_j = Q_i^*$
contributes a factor of $q^{\deg Q_i}+1 = q^{\deg Q_j}+1$, so the centralizer order is less than $\alpha(P) q^r$.

By Lemma~\ref{Undersized classes are rare}, if $q$ is sufficiently large, we may
assume $\alpha(P) < 2$ for all but an $\epsilon/2$ fraction of elements of $\P$, and the lemma follows.  It therefore suffices to prove the lemma when $q$ is fixed.
By Lemma~\ref{Exponential decay}, we may additionally assume $\rho(P)$ is bounded.   We can therefore factor $P$ as a product of two polynomials,
a square-free factor $Q$ and a factor $R$ relatively prime to $Q$ and of bounded degree.  The centralizer of $s$ is therefore a product of a torus with $\le \alpha(Q) q^{r-r_0}$
elements, and a connected reductive group of rank $r_0$, with a bounded number of elements.  This gives the desired bound.

\end{proof}

The following theorem is not needed for the main result but may be of interest in its own right.

\begin{thm}
\label{CC to elements}

For all $\epsilon > 0$ there exists $\delta > 0$ such that if $G$ is a finite simple group of Lie type, and $S$ is a normal subset of $G$ with less than $\delta |G|$ elements, then
$S$ consists of less than $\epsilon k(G)$ conjugacy classes.  
\end{thm}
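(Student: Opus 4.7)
The plan is to derive the theorem from Proposition~\ref{Few large centralizers}. Its substance is that, for any $\eta>0$, we can find $C$ so that, outside an $\eta$-fraction of $\P$, every class of $\tilde G$ in the corresponding fiber of $f$ has centralizer of order at most $Cq^r$. Morally, most classes of $G$ then have size within a bounded factor of the average $|G|/k(G)$, so a normal set of density $\delta$ can contain only $O(\delta)\,k(G)$ such classes, while the remaining classes are themselves few in number.

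First I would promote the density statement from $\P$ to $\At$. Writing $\P^b$ for the $\eta$-fraction of ``bad'' polynomials, I would split the contribution to $f^{-1}(\P^b)$ according to fiber size: the $P$'s with $|f^{-1}(P)|<M$ contribute at most $M\eta|\P|$, while those with $|f^{-1}(P)|\ge M$ contribute $O(|\P|/M)$ via the $n^{-2}$-tail of condition (5) of Proposition~\ref{Master}, valid in our setup by \S9. Optimizing $M\sim\eta^{-1/2}$ and using $|\P|\le|\At|$ (Lemma~\ref{Conditions 7 and 8}) yields that only an $O(\sqrt\eta)$-fraction of classes of $\tilde G$ have centralizer exceeding $Cq^r$. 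Next I would pass from $\tilde G$ to $G$. For a class $\tilde g^{\tilde G}$ with trivial stabilizer $S_Z(\tilde g)\subseteq Z$ under multiplication by $Z$, a direct count gives $|g^G|=|\tilde g^{\tilde G}|\ge|\tilde G|/Cq^r=|Z|\cdot|G|/Cq^r$. By Lemma~\ref{A orbits} together with the subexponential bound of Lemma~\ref{Simple Growth} on the unipotent class count in any centralizer, the classes with nontrivial $S_Z$ form a vanishing fraction of $\At$ and hence of $\A$. So outside an $O(\sqrt\eta)$-fraction of classes of $G$, the class size is at least $|Z|\cdot|G|/Cq^r$.

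Given $S\subseteq G$ normal with $|S|<\delta|G|$, the number of large classes (size $\ge|Z|\cdot|G|/Cq^r$) in $S$ is at most $\delta Cq^r/|Z|$; by \cite{FG} and the elementary inequality $k(G)\ge k(\tilde G)/|Z|$ this is at most $\delta C\,k(G)$. The small classes in $G$ contribute $O(\sqrt\eta)\,k(G)$ to the total. Choosing first $\eta$ and then $\delta$ small enough, the total is less than $\epsilon k(G)$. Exceptional finite simple groups of Lie type have rank bounded by $8$ and are treated separately, by a combination of asymptotics (regular semisimple elements abound for large $q$, giving minimum class size $\gtrsim|G|/q^r$) and a finite check for small $q$. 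The main obstacle is maintaining uniformity in $|Z|$: for type A groups $|Z|$ may grow linearly with the rank, and the factor of $|Z|$ lost when Proposition~\ref{Few large centralizers} is converted into a centralizer bound in $G$ must be precisely compensated by the factor of $|Z|$ recovered in the relation $k(G)\ge k(\tilde G)/|Z|$; this cancellation works precisely because classes with nontrivial $Z$-stabilizer are negligible, which is where Lemma~\ref{A orbits} does the decisive work.
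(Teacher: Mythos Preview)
Your proposal is correct and follows essentially the paper's approach: promote Proposition~\ref{Few large centralizers} from $\P$ to $\At$ via the tail bound of condition~(5) (your $M\sim\eta^{-1/2}$ optimization is exactly the paper's inequality~\eqref{Few A-tildes in big fibers}), then combine with $k(\tilde G)\ge q^r$ from \cite{FG}, handling the $\tilde G\leftrightarrow G$ passage through the rarity of nontrivial $Z$-stabilizers. The paper organizes this slightly differently, first reducing the statement for $G$ to the analogous statement for $\tilde G$ (via Proposition~\ref{1 to 6}) and then working entirely upstairs, whereas you track the $|Z|$-factors directly in $G$; the one technical point you gloss over is that for Suzuki and Ree groups the Lang--Weil estimate underlying ``regular semisimple elements abound for large $q$'' does not directly apply, and the paper invokes \cite[Th.~4.2]{LP} as a replacement.
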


\begin{proof}

First we assume that $G$ is of type A--D and of sufficiently high rank, so we are in the setting of Proposition~\ref{Master}.
Let $T\subset \A$ denote the set of conjugacy classes in $G$ corresponding to elements of $S$.  
By construction, the cardinality of any fiber of $\phi\colon \At\to \A$ over $\A^\circ$ is $|Z|$.
By Proposition~\ref{1 to 6}, hypotheses (1) and (3) of Proposition~\ref{Master} hold when $\At = {\tilde G}^{\tilde G}$
and $\A = G^G$.  Choosing the parameter $\delta$ of this proposition sufficiently small, we may assume
that $|\phi^{-1}(T)| > |Z| |T|/2$.  

Let $\tilde S$ denote the inverse image of $S$ in $\tilde G$, so $|\tilde S|/|\tilde G| = |S|/|G|$.
Let $\tilde T\subset \At$ denote the set of conjugacy classes in $\tilde G$ corresponding to elements of $\tilde S$.  Then $\tilde T = \phi^{-1}(T)$,
so 
$$\frac{|\tilde T|}{|\At|} > \frac{ |Z| |T|}{2|\At|} \ge \frac{|T|}{2|\A|}.$$
It therefore suffices to prove that for all $\epsilon > 0$ there exists $\delta > 0$ such that for all normal subsets $\tilde S$ of $\tilde G$ with 
$|\tilde S| \le \delta |\tilde G|$ elements, the number $|\tilde T|$ of conjugacy classes in $\tilde S$ is $\le \epsilon|\At| = \epsilon k(\tilde G)$.  

By Proposition~\ref{Few large centralizers}, fixing $C$ sufficiently large, the fraction of elements in $\P$
whose fibers have elements with centralizer order greater than $C q^r$ is as small as desired.  By inequality \eqref{Few A-tildes in big fibers} and condition (7), 
the fraction of elements in $\tilde G$
with centralizer order greater than $C q^r$ is likewise as small as desired.  Therefore, we may assume that in any normal subset $\tilde S$ of $\tilde G$ containing $\epsilon k(\tilde G)$
conjugacy classes, at least $\epsilon k(\tilde G)/2$ have centralizer order $\le C q^r$.  These account for at least 
$$\frac{\epsilon k(\tilde G) |\tilde G|}{2 C q^r}$$
elements in $\tilde S$.  By \cite[Theorem~1.1]{FG}, $k(\tilde G) \ge q^r$, so we may take $\delta := \epsilon /2C$.
This leaves the bounded rank case.  

In the limit as $q\to \infty$, the fraction of elements of $\tilde G$ which are regular semisimple goes to $1$.  The centralizers of any semisimple element is connected reductive, and for regular semisimple elements, the centralizer is a torus and therefore has at most $(q+1)^r$ elements.  As $r$ is fixed, this gives an upper bound of the form $Cq^r$.  Thus, the above claim for $\tilde S$ holds as in the high rank classical case.

Again, in the large $q$ limit, the fraction of elements $g$ of $\tilde G$ which are conjugate to $gz$ for some non-trivial central element $z\in Z$ goes to $0$, by the uniform version of the Lang-Weil estimate.  In the complement of this set of elements, the map from conjugacy classes of $\tilde G$ to conjugacy classes of $G$ is $|Z|$ to $1$.  Thus, the theorem for $S$ again reduces to the claim for $\tilde S$, just as in the classical case.

For Suzuki and Ree groups, the argument given above must be modified slightly, since the Lang-Weil estimate does not apply to the number of points in such a group lying on a subvariety of the ambient algebraic group, but one can use \cite[Th.~4.2]{LP} as a replacement.
\end{proof}

We now prove Theorem~\ref{main}.

\begin{proof}
We need only prove that given $\epsilon > 0$, if $|G|$ is sufficiently large, we can choose  $\delta > 0$ and then $m$ so that 
for $\P^\circ$ defined by $m$-regularity,
condition (9) of Proposition~\ref{Master} holds.

For all $k>0$ there exists $N$ such if $n\geq 2$ and $q$ is a prime power, then the fraction of elements of $\L_n(q)$, $\U_n(q)$, or $\O_{n}(q)$
with more than $N\log n$ factors is less than $n^{-k}$ \cite[Prop.~2.4--2.6]{LS}.
Therefore, for every $k$, for sufficiently large $n$, in any of these groups, the fraction of elements with no irreducible factor
of degree $>2\sqrt n$ is less than $n^{-k}$, which can be taken as small as we wish.
In the case that $G$ is of linear or unitary type, by Lemma~\ref{polynomial estimate},
we may further assume that no prime factor of $n$ is $\equiv 1$ modulo the degree of an irreducible factor with degree $>2\sqrt n$.
Assuming $P_1\in \P^\circ$ has an irreducible factor $Q$ of degree $> 2\sqrt n$,
then by Lemma~\ref{polynomial estimate}, the fraction of elements $P_2\in\P^\circ$ such that 
$P_2$ has an irreducible factor whose degree is an integer multiple of $\deg Q/2$ goes to $0$ as $n\to \infty$.  
By Lemma~\ref{Just one}, if $\epsilon_1>0$ and $n$ is sufficiently large, at least a $1-\epsilon_1$ fraction of pairs $(P_1,P_2)\in\P^2$ have the property that if $s_i\in \tilde G$ is semisimple and maps to $P_i$ for $i=1$ and $i=2$, then there exists a prime $\ell$ such that
\begin{equation}
\label{Divisibility conditions}
\ell\mid |Z_{\tilde G}(s_1)|,\,\ell\nmid |Z_{\tilde G}(s_2)|.
\end{equation}
By construction, $\ell$ does not divide $n$.

To prove condition (9), we may partition the set 
$$\{(P_1,P_2)\in \P^\circ\times \P^\circ \mid(P_1,P_2)\in (f,g)((\phi,\psi)^{-1}(\X))\}$$
into two subsets, one consisting of pairs  satisfying \eqref{Divisibility conditions}
and one consisting of pairs which do not satisfy it.  We have already bounded the latter set, and it suffices to prove that for $\epsilon_2>0$, if $n$ is sufficiently large, the first set has less than 
$\epsilon_2|\P|^2$ elements.  Suppose that this is not the case.  

If $(P_1,P_2)$ belongs to the subset satisfying
\eqref{Divisibility conditions}, choose $\tilde g^{\tilde G}\in \At$ lying over $P_1$ and $(\chi,\chi_{\ad})$ in $\Bt$ lying over $P_2$
such that $\chi(g)\neq 0$, where $g^G = \phi(\tilde g^{\tilde G})$.
We denote the semisimple conjugacy class of $\tilde G = (\uG_{\ad})^*(\F_q)$ associated to $\chi_{\ad}$ by $t^{\tilde G}$, so the image of $t$ in its natural representation has characteristic polynomial $P_2$.
By Lemma~\ref{Which Z}, there exists a Zsigmondy prime $\ell>\sqrt{n}$ which divides the order of the centralizer of $g$ in $G$ but not the order of the centralizer of $t$ in $\tilde G$.
Therefore,
$$\ord_\ell |g^G| < \ord_\ell |G|\ = \ord_\ell |\tilde G| = \ord_\ell \chi_{\ad}(1).$$
The fraction $\chi_{\ad}(1)/\chi(1)$ is an integer dividing the order of the center of $\tilde G$
and therefore not divisible by $\ell$, so $\ell$ divides $d_{g^G,\chi}$.  As the map $\phi$ is at most $|Z|$ to $1$,
we obtain at least $\epsilon_2 |\P|^2/|Z|$ such pairs $(g^G,\chi)$.

For any  $\alpha\in \Q(\zeta_{|G|})$, 
let $T(\alpha)$ denote the normalized trace 
$$\frac{1}{[\Q(\zeta_{|G|}):\Q]}\Tr_{\Q(\zeta_{|G|})/\Q}(\alpha).$$
Note that $\Gal(\Q(\zeta_{|G|})/\Q)$ is commutative, and complex conjugation is an element of the group, so if the $\Aut(\C)$-orbit of a non-zero algebraic integer $\alpha$ is $\{\alpha_1,\ldots,\alpha_k\}$, then
$$T(|\alpha|^2) = T(\alpha\bar \alpha) = \frac 1k \sum_{i=1}^k \alpha_i\bar\alpha_i \ge 1.$$
As $(g^G,\chi)\in \X$, by definition $\chi(g)\neq 0$, so
$T(|\chi(g)|^2)\ge n$. 
Therefore,
$$\sum_{h\in g^G} T(|\chi(h)|^2) \ge n |g^G|.$$
If $n$ is sufficiently large, by Proposition~\ref{Few large centralizers}, we may assume that at least $\epsilon_3|\P|^2/|Z|$
pairs $(g^G,\chi)$ arising in this way satisfy $|g^G| > |G|/Cq^r$.  Thus,
\begin{align*} |G|\cdot \Irr(G)  &= T(\sum_{h\in G}\sum_{\chi\in \Irr(G)}|\chi(h)|^2) \\
&= \sum_{h\in G}\sum_{\chi\in \Irr(G)} T(|\chi(h)|^2) \ge \frac{\epsilon_3|\P|^2}{|Z|}\frac{n|G|}{Cq^r}\\
&= \frac{\epsilon_3q^rn|G|}{|Z|C}.
\end{align*}
By \cite[Th.~1.1]{FG}, $|\Irr(G)| \le 27.2q^r$.  For orthogonal and symplectic groups, $|Z|\le 4$, so that $|\Irr(G)| < 109 q^r/|Z|$.
By \cite[Cor.~3.7]{FG}, $|\Irr(G)| < 4 q^r/|Z|$ for $G = \PSL_{r+1}(q)$, and by \cite[Prop.~3.10]{FG},
$|\Irr(G)| \le 9 q^r/|Z|$ for $G = \PSL_{r+1}(q)$.  Putting these together, we deduce
$109 > \epsilon_3n/C$, which is impossible for large $n$.

\end{proof}


\begin{thebibliography}{99}

\bibitem{Carter}
R.~W.~Carter, 
Centralizers of semisimple elements in the finite classical groups. 
\emph{Proc.\ London Math.\ Soc.\ (3)} {\bf 42} (1981), no.\ 1, 1--41.

\bibitem{EL}
P.~Erd\H os and J.~Lehner,  
The distribution of the number of summands in the partitions of a positive integer. 
\emph{Duke Math.\ J.} {\bf 8} (1941), 335--345.

\bibitem{ET}
P.~Erd\H os and P.~Tur\'an,
On some problems of a statistical group-theory. I. 
\emph{Z.\ Wahrscheinlichkeitstheorie und Verw.\ Gebiete} {\bf 4} (1965), 175--186. 

\bibitem{FG}
J.~Fulman and R.~Guralnick,
Bounds on the number and sizes of conjugacy classes in finite 
Chevalley groups with applications to derangements.
\emph{Trans.~Amer.~Math.~Soc.} {\bf 364} (2012), 3023--3070.

\bibitem{FG2}
J.~Fulman and R.~Guralnick,
The number of regular semisimple conjugacy classes in the finite classical groups. 
\emph{Linear Algebra Appl.} {\bf 439}, no.~2, (2013), 488--503.

\bibitem{PXGallagher2}
P.~X.~Gallagher,
Degrees, class sizes and divisors of character values. 
\emph{J.~Group Theory} {\bf 15} (2012), 455--467.

\bibitem{GLM}
P.~X.~Gallagher, M.~Larsen and A.~R.~Miller,
Many zeros of many characters of $\GL(n,q)$,
arXiv:1909.01111.

\bibitem{GLO}
S.~Gonshaw, M.~W.~Liebeck, and E.~A.~O'Brien,
Unipotent class representatives for finite classical groups. 
\emph{J.~Group~Theory} {\bf 20} (2017), no.~3, 505--525.

\bibitem{LP}
M.~Larsen and R.~Pink,
Finite subgroups of algebraic groups. 
\emph{J.\ Amer.\ Math.\ Soc.} {\bf 24} (2011), no.~4, 1105--1158.

\bibitem{LS}
M.~Larsen and A.~Shalev,
On the distribution of values of certain word maps. 
\emph{Trans.~Amer.~Math.~Soc.} {\bf 368} (2016), 1647--1661.

\bibitem{Lusztig-Classical}
G.~Lusztig, 
Irreducible representations of finite classical groups,
\emph{Invent. Math.} \textbf{43} (1977), no.\ 2, 125--175. 

\bibitem{Lusztig}
G.~Lusztig,  
Characters of reductive groups over a finite field, 
Annals of Mathematics Studies, 107. Princeton University Press, Princeton, NJ, 1984. 

\bibitem{Lusztig-Disconnected}
G.~Lusztig, On the representations of reductive groups with disconnected centre, 
Orbites unipotentes et repr\'esentations, I. 
Ast\'erisque No.\ 168 (1988), 10, 157--166.

\bibitem{ARMiller1}
A.~R.~Miller,
The probability that a character value is zero for the symmetric group,
\emph{Math.~Z.} {\bf 277} (2014), 1011--1015.

\bibitem{ARMiller2}
A.~R.~Miller, On parity and characters of symmetric groups. 
\emph{J.\ Combin.\ Theory Ser.\ A} {\bf 162} (2019), 231--240.

\bibitem{Steinberg}
R.~Steinberg,
Endomorphisms of linear algebraic groups. 
Memoirs of the American Mathematical Society, No. 80 American Mathematical Society, Providence, R.I. 1968.

\end{thebibliography}
\end{document}